\documentclass[amssymb,amsfonts,refcheck,12pt,verbatim,righttag]{amsart}

\usepackage{placeins}
\usepackage{float}
\usepackage{booktabs}
\usepackage{pgfplots}
\usepackage{pgfplotstable}
\pgfplotsset{compat=1.17}
\usepackage{tikz,pgfplots}
\usepackage{xcolor}
\usepackage{subcaption}
\usetikzlibrary{patterns,decorations.pathreplacing}
\usetikzlibrary{arrows.meta, decorations.markings, calc, decorations.pathreplacing}
\usepackage{graphicx}
\usepackage{graphics,color}
\usepackage{amsmath}
\usepackage{amsfonts}
\usepackage{color}
\usepackage{amssymb}
\usepackage{cancel}

\numberwithin{equation}{section}  
\theoremstyle{plain}

\newtheorem{thm}{Theorem}[section]
\newtheorem{lem}[thm]{Lemma}
\newtheorem{pro}[thm]{Proposition}

\newtheorem{ex}[thm]{Example}

\def\R {{\Bbb R}}
\def\N {{\Bbb N}}

\def\A {{\mathcal A}}

\newcommand{\set}[1]{\left\{#1\right\}}



\usepackage[colorlinks,linkcolor=blue,citecolor=blue,pagebackref,urlcolor=black,hypertexnames=false]{hyperref}

\usepackage{float}
\begin{document}
	\baselineskip 15pt
	\baselineskip 13.7pt

\newcommand{\Ptr}{\mathbb P}
\newcommand{\Etr}{\mathbb E}
\newcommand{\Otr}{\Omega}

\title{Random Survivor Sets with Moving Holes in Bedford--McMullen Carpets}

\author[D.-R.  Kong]{Derong Kong}
\address[D.-R.  Kong]{College of Mathematics and Statistics, Center of Mathematics, Chongqing University, Chongqing, 401331, P.R.China}
\email{derongkong@126.com}

\author[C.-Y. Ma]{Cai-Yun Ma}
\address[C.-Y. Ma]{ HUN-REN–BME Stochastics Research Group, Müegyetem rkp. 3., H-1111 Budapest, Hungary}
\email{macaiyun813@gmail.com}

\author[K. Simon]{K\'{a}roly Simon}
\address[K. Simon] { Department of Stochastics, Institute of Mathematics, Budapest University of Technology and
	Economics, Müegyetem rkp. 3., H-1111 Budapest, Hungary \& HUN-REN–BME Stochastics Research Group, Müegyetem rkp. 3., H-1111 Budapest, Hungary
}
\email{simonk@math.bme.hu}

\subjclass[2020]{Primary 28A80; Secondary 37C45, 37H05}

\keywords{Bedford--McMullen carpets, random survivor sets,
	moving holes, Hausdorff dimension, random dynamical systems,
	multiplicative cascades}

\begin{abstract}
	Let $2\le m < n$ be integers, and let $K(\mathcal{A})$ be the Bedford--McMullen carpet generated by $\{(x,y)\mapsto((x+i)/m,(y+j)/n):(i,j)\in\mathcal{A}\}$, where $\mathcal{A}$ is a subset of $\{0,1,\ldots,m-1\}\times\{0,1,\ldots,n-1\}$. Write $\mathcal{B}$ for the set of occupied columns, and assume that $\#\mathcal{B}\ge2$ and each column contains at least two symbols of $\mathcal{A}$. At each vertex of the column tree $\mathcal{B}^*$, we independently choose one forbidden symbol from $\mathcal{A}$ according to a prescribed distribution. Let $\mathbb{P}$ denote the resulting product measure on $\mathcal{A}^{\mathcal{B}^*}$. This gives a random survivor set $K_\omega(\mathcal{A})$ with $\omega\in\mathcal{A}^{\mathcal{B}^*}$. We show that, for $\mathbb{P}$-a.e. $\omega$,
	\begin{equation*}
		\dim_H K_\omega(\mathcal{A})=\frac{1}{\log m}\log\left(\int \sum_{b\in\mathcal{B}}\ell_b(\omega)^{\frac{\log m}{\log n}} d\mathbb{P}(\omega)\right),
	\end{equation*}
	where $\ell_b(\omega)$ is the number of surviving symbols in column $b$ at the root.
\end{abstract}
 \maketitle

\section{Introduction and main result}\label{sec-1}
In this paper, we study the Hausdorff dimension of survivor sets arising from a randomly moving hole in Bedford--McMullen carpets.

We first recall the deterministic construction. Let $m,n\in\mathbb N$
with $2\le m<n$, and let $\mathcal A$ be a nonempty subset of
$\{0,\ldots,m-1\}\times\{0,\ldots,n-1\}$. 
Consider the affine iterated function system (IFS) 
\begin{equation}\label{def-ifs}
	 \Phi=\left\{\phi_{i,j}(x,y)
	=\left(\frac{x+i}{m},\frac{y+j}{n}\right):(i,j)\in\mathcal A\right\}.
\end{equation}
There is a unique nonempty compact set $K(\mathcal A)\subseteq[0,1]^2$
satisfying
\begin{equation*}
 K(\mathcal A)=\bigcup_{(i,j)\in\mathcal A}\phi_{i,j}(K(\mathcal A)).
\end{equation*}
It is called the
\emph{Bedford--McMullen carpet} generated by $\mathcal A$.
These carpets were introduced independently by  Bedford \cite{Bedford1984} and McMullen
\cite{McMullen1984} independently.

Let $\tau$ be the column projection, defined by
$$
 \tau:\mathcal A\to\{0,\ldots,m-1\},\qquad \tau(i,j)=i.
$$
We extend $\tau$ coordinatewise to finite and infinite words.
Set $\mathcal B:=\tau(\mathcal A)$, {and for $b\in\mathcal B$ we write
$$
 \ell_b:=\#\{j:(b,j)\in\mathcal A\},
$$}
where $\#A$ denotes the cardinality of a finite set $A$.  McMullen \cite{McMullen1984} computed the box and
	Hausdorff dimensions of $K(\mathcal A)$. 
The packing dimension equals the box dimension by
\cite[Corollary~3.9]{Falconer2003}, since every nonempty relatively
open subset of $K(\mathcal A)$ contains a bi-Lipschitz affine image
of $K(\mathcal A)$.   Thus  
$$  \dim_B K(\mathcal A)=\dim_P K(\mathcal A)
=\log_m\#\mathcal B+\log_n\frac{\#\mathcal A}{\#\mathcal B} $$
and
$$   \dim_H K(\mathcal A)
=\frac{1}{\log m}\log\left(\sum_{b\in\mathcal B}\ell_b^\lambda\right),$$
{where $\lambda:=\frac{\log m}{\log n}$.}
In particular, the Hausdorff dimension depends on the distribution of
rectangles among the columns and may differ from the box dimension.
Subsequent work extended the theory to more general self-affine carpets
and affine-invariant sets; see, for example,
\cite{GatzourasLalley1992,Baranski2007,Kenyon1996}.

For random recursive self-similar sets, almost sure dimension formulae
were obtained by Falconer, Graf, and Mauldin--Williams
\cite{Falconer1986,Graf1987,Graf1988,MauldinWilliams1986}.
Gui and Li \cite{Gui2008} studied a random version of
Bedford--McMullen carpets in which the construction pattern is selected
at each level and shared by all rectangles at that level. They determined
the Hausdorff, packing and box dimensions of these sets.

Related questions arise in the study of open dynamical systems. Given a
dynamical system with a hole, the survivor set consists of points whose
forward orbits never enter the hole. This subject goes back to Pianigiani
and Yorke \cite{Pianigiani1979}; see also
\cite{Demers2005,DemersYoung2006,Ferguson2012,Bruin2018}.
For Bedford--McMullen carpets, Ferguson, Jordan and Rams \cite{FergusonJordanRams2015} studied survivor sets associated with fixed Markov holes, as well as related shrinking target problems.
More recently, Kong, Sun and Wang \cite{Kong2026} studied expanding maps with moving cylinder holes and obtained dimension formulae in terms of the growth rates of admissible words.

In our model, each vertex of the column tree is assigned a forbidden
symbol, independently and with a common prescribed distribution.
Sequences with the same preceding column word have the same forbidden
next symbol. Different column words at the same level may therefore give
different restrictions. We determine the almost sure Hausdorff dimension
of the resulting survivor set.

To state our main result, we introduce some notation. Write $\Sigma:=\mathcal A^{\N}$,
$\mathcal A^*:=\bigcup_{k=0}^\infty\mathcal A^k$, and
$\mathcal B^*:=\bigcup_{k=0}^\infty\mathcal B^k$, where
$\mathcal A^0=\mathcal B^0=\{\varepsilon\}$  with $\varepsilon$   the
empty word. The natural coding map
$\Pi:\Sigma\to K(\mathcal A)$ is given by
\begin{equation}\label{eq}
	\Pi((i_k,j_k)_{k\ge1})
	=\left(\sum_{k=1}^\infty\frac{i_k}{m^k},
	\sum_{k=1}^\infty\frac{j_k}{n^k}\right).
\end{equation}

Let $\pi=(\pi_\gamma)_{\gamma\in\mathcal A}$ be a probability vector,
and set $\Omega:=\mathcal A^{\mathcal B^*}$ and
$\mathbb P:=\pi^{\otimes\mathcal B^*}$.  We regard $\mathcal B^*$ as the full rooted $\#\mathcal B$-ary tree,
	with the {empty word $\varepsilon$} as its root; each vertex $v\in\mathcal B^*$
	has exactly $\#\mathcal B$ children, namely $vb$, $b\in\mathcal B$.
	Thus $\Omega$ is the space of all $\mathcal A$-labellings of this tree.   Furthermore, $\mathbb P$ is a probability  distribution on $\Omega$ which is a Bernoulli measure determined by $\pi $.  For each $v\in\mathcal B^*$, define the coordinate random variable  $F_v: \Omega\to \mathcal A$ by 
$$  F_v(\omega)=\omega_v
\qquad \textrm{for}\quad\omega=(\omega_u)_{u\in\mathcal B^*}\in\Omega.  $$
The random variables  $F_v, v\in\mathcal B^*$ are independent and have the common distribution
$\mathbb P(F_v=\gamma)=\pi_\gamma$.
For each $\omega\in\Omega$, let
\begin{equation}\label{eq:symbolic-sigma-omega}
  \Sigma^{\omega}:=
\left\{(d_k)_{k\ge1}\in\Sigma:
d_{k+1}\ne F_{\tau(d_1\cdots d_k)}(\omega)
\text{ for all }k\ge0\right\}, 
\end{equation}
where {for $k=0$   we set $\tau(d_1\cdots d_0)=\tau(\varepsilon)=\varepsilon$}. The corresponding random survivor set is
$K_\omega(\mathcal A):=\Pi(\Sigma^{\omega})$. 
The forbidden symbol depends on the preceding column word, not merely
on its length. Thus the model differs from {that studied by Gui and Li \cite{Gui2008}.}

For $b\in\mathcal B$ and $\omega\in\Omega$, write
\begin{equation}\label{eq-liomega}
 \ell_b(\omega):=\ell_b-\mathbf1_{\{\tau(F_\varepsilon(\omega))=b\}},
\end{equation}
 where $\mathbf 1_{\{\tau(F_\varepsilon(\omega))=b\}}$ is the indicator function, which equals one if $\tau(F_\varepsilon(\omega))=b$, and zero otherwise.  Then $\ell_b(\omega)$ is the number of admissible symbols in column $b$ at the root of $\Sigma^\omega$. 
Throughout the paper we assume that
\begin{equation}\label{eq-assumption}
 \#\mathcal B\ge2,\qquad \textrm{and}\qquad\ell_b\ge2\quad\textrm{for}\quad b\in\mathcal B.
\end{equation}
In particular, every column retains at least one symbol after a deletion,
so $\Sigma^{\omega}$ and $K_\omega(\mathcal A)$ are nonempty compact
sets for every $\omega$.

We can now state our main result.
\begin{thm}\label{thm-main}
Let $\Phi$ be the affine IFS defined in \eqref{def-ifs}. Assume that $\#\mathcal B\ge2$ and $\ell_b\ge2$ for all $b\in\mathcal B$.  Let
$\Omega=\mathcal A^{\mathcal B^*}$ and
$\mathbb P=\pi^{\otimes\mathcal B^*}$, where
$\pi=(\pi_\gamma)_{\gamma\in\mathcal A}$ is a probability vector
on $\mathcal A$. Then, for $\mathbb P$-a.e.\ $\omega\in\Omega$,
\begin{equation}\label{eq-main-dimension}
 \dim_H K_\omega(\mathcal A)
 =\frac{1}{\log m}\log\left(
   \int_\Omega\sum_{b\in\mathcal B}\ell_b(\omega)^\lambda
   \,d\mathbb P(\omega)\right).
\end{equation}
\end{thm}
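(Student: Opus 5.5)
Write $s:=\frac{1}{\log m}\log Z$, where $Z:=\int_\Omega\sum_{b\in\mathcal B}\ell_b(\omega)^\lambda\,d\mathbb P(\omega)=\mathbb E\sum_b \ell_b(\cdot)^\lambda$. The plan is to prove matching upper and lower bounds for $\dim_H K_\omega(\mathcal A)$ using McMullen's approximate squares. The key structural observation is how the survivor set looks over a fixed column word $u=b_1\cdots b_k\in\mathcal B^k$. The forbidden symbol at level $j+1$ depends only on $b_1\cdots b_j$. Therefore the number of admissible row choices over $u$ factorises as $\prod_{j=0}^{k-1}\ell_{b_{j+1}}(\sigma^{b_1\cdots b_j}\omega)$, where $\sigma^v\omega:=(\omega_{vw})_{w}$ denotes the subtree labelling rooted at $v$. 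So the column tree $\mathcal B^*$, equipped with the edge weights $W_v(b):=\ell_b(\sigma^v\omega)^\lambda$, is a Mandelbrot multiplicative cascade (Mandelbrot/Kahane--Peyrière, Biggins). The weight vectors $(W_v(b))_{b\in\mathcal B}$ are i.i.d.\ over vertices $v$, since $W_v$ depends only on $\omega_v$. Moreover they are bounded: $1\le W_v(b)\le \max_b\ell_b^\lambda$ by \eqref{eq-assumption}.

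\textbf{Upper bound.} Set $L(k):=\lceil \lambda k\rceil$. An approximate square of level $k$ is determined by a full symbol word of length $L(k)$ together with a column word $b_{L(k)+1}\cdots b_k$, and has side about $m^{-k}$. Heuristically the weight should be $\prod\ell^{\lambda}$ along the column path, as in McMullen's computation. To avoid weight bookkeeping I would instead bound the $t$-dimensional Hausdorff sum directly. For $t>s$, take the cover by all level-$k$ approximate squares meeting $K_\omega$. The column parts beyond level $L(k)$ are at most $\#\mathcal B^{k-L(k)}$ in number, but this crude count is too lossy. The better route is to choose a different cover, stopping along each column branch at a Hölder-type mixed level, as in McMullen's proof. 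Concretely, I would follow the standard proof of McMullen's lower bound for the Hausdorff dimension via the Bernoulli measure with column weights proportional to $\ell_b^\lambda$. Here the randomised version is the cascade measure $\mu_\omega$: it gives mass $\prod_j W_{b_1\cdots b_j}(b_{j+1})\,Z^{-k}\,Y_{u}/Y_\varepsilon$ to the column cylinder $u$, where $Y_v$ is the limit of the cascade martingale on the subtree at $v$. Within a column, the mass is split uniformly among the admissible row symbols. The martingale converges in $L^2$, since the weights are bounded and the tree is $\#\mathcal B\ge2$-ary, and $Y>0$ a.s. Positivity should follow because every vertex has $\ge 2$ children with weights $\ge1$, so extinction is impossible and the standard zero--one law applies.

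\textbf{Local dimension and both bounds.} The core computation is the local dimension of $\mu_\omega$ at approximate squares. The $\mu_\omega$-mass of the level-$k$ approximate square containing a point with column sequence $(b_j)$ equals
\[
\prod_{j<k}\frac{W_{b_1\cdots b_j}(b_{j+1})}{Z}\cdot\prod_{j<L(k)}\ell_{b_{j+1}}(\sigma^{b_1\cdots b_j}\omega)^{-1}\cdot\frac{Y_{b_1\cdots b_k}}{Y_\varepsilon}.
\]
After taking $-\frac1k\log$, this must be compared with $k\log m$. The terms with $j<L(k)$ contribute $\lambda\log\ell-\log\ell$ each, and the terms with $j\ge L(k)$ contribute $\lambda\log\ell$. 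This does not telescope exactly as in the deterministic case. One needs the ergodic averages $\frac1k\sum_{j<k}\log\ell_{b_{j+1}}(\sigma^{b_1\cdots b_j}\omega)$ to converge along $\mu_\omega$-typical paths. This holds by the ergodic theorem for the Peyrière measure: the environment seen from a $\mu_\omega$-typical path is stationary and ergodic under $\mathbb Q=Y\,d\mathbb P\otimes\mu_\omega$, with i.i.d.\ increments by the many-to-one formula. With $a:=\mathbb E_{\mathbb Q}\log\ell$, the difference term becomes $\lambda k a\cdot(1/\lambda)$ minus corrections. After cancellation one should obtain $\lim -\frac{\log\mu(Q_k)}{k\log m}=s$ for $\mu_\omega$-a.e.\ point, which gives $\dim_H\ge s$ by the mass distribution principle. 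One must also control $\log Y_{u}/k\to0$; I would do this via the moment bounds $\mathbb E Y^{-p}<\infty$ (which hold because the weights are bounded below by $1$ and $Z>1$) together with Borel--Cantelli over the $\le\#\mathcal B^k$ vertices. For the upper bound, I expect the cleanest route is to bound $\mathbb E\sum_{Q}\mathrm{diam}(Q)^t$ over a McMullen-type stopping cover. The expected count at depth $k$ along columns factorises as $Z^k$ times $m^{-tk}$, so it tends to $0$ for $t>s$. Fatou's lemma then yields $\dim_H\le s$ a.s.

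The main obstacle is the mismatch in the local dimension computation between the two scales $L(k)$ and $k$. In the deterministic case McMullen's choice of weights makes the terms telescope. Here the row counts along a typical path are a random environment, not constant, so one needs the Peyrière ergodic argument to show that the $\lambda$-weighted averages on $[0,L(k))$ and $[0,k)$ match asymptotically. If they do not combine exactly to $s$, I would replace $\mu_\omega$ by a cascade with optimised weights $\ell^{\theta}$ and optimise over $\theta$. The claimed formula indicates that $\theta=\lambda$ is optimal.
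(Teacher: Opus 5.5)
Your lower bound is essentially the paper's argument. It uses the same cascade weights $\ell_b(\omega)^\lambda/Z$, the same measure $\mu_\omega$, the same product formula for $\mu_\omega(Q_k)$, and averaging along the spine of the Peyri\`ere measure. Your observation that the pairs $(F_{b_1\cdots b_j},b_{j+1})$ are i.i.d.\ under $Y\,d\mathbb P\otimes\mu_\omega$ is a legitimate shortcut for the Birkhoff step, since $\log\ell_{\tau(d_1)}(\omega)$ only sees columns.

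There is one slip there. To get $\liminf_k\frac{-\log\mu_\omega(Q_k)}{k\log m}\ge s$ you need $\limsup_k\frac1k\log Y_{b_1\cdots b_k}\le0$, which is control of the \emph{upper} tail of $Y$. Negative moments only control the lower tail. You can use $\mathbb E[Y\log^+Y]<\infty$ together with stationarity along the spine, as in Lemma~\ref{lem-M0}. Alternatively, use $\mathbb E Y^p<\infty$ for every $p$, which holds here because $\sum_b\mathbb E W_b^p\le(\max_b\ell_b^\lambda/Z)^{p-1}<1$, combined with your union bound.

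The genuine gap is the upper bound. It rests entirely on the sentence ``the expected count at depth $k$ along columns factorises as $Z^k m^{-tk}$'', and you never say which cover realises it. A fixed-depth cover consists of exactly $(\#\mathcal A-1)^{\theta_k}(\#\mathcal B)^{k-\theta_k}$ approximate squares for every $\omega$, which only gives the box-dimension bound. Write $N_\omega(v):=\prod_{j<|v|}\ell_{v_{j+1}}(\widetilde\sigma_{v_1\cdots v_j}\omega)$. Over a column word $u\in\mathcal B^k$ there are $N_\omega(u_1\cdots u_{\theta_k})$ level-$k$ approximate squares, not $N_\omega(u)^\lambda$. The identity $\mathbb E\sum_{u\in\mathcal B^k}N_\omega(u)^\lambda=Z^k$ is correct, but it becomes relevant only if each branch is stopped at a level $k$ with $N_\omega(u_1\cdots u_{\theta_k})\le e^{\epsilon k}N_\omega(u_1\cdots u_k)^\lambda$. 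A cover must contain every point, so such levels must exist on \emph{every} column branch, not just on $\mu_\omega$-typical ones. The Peyri\`ere/ergodic argument from your ``main obstacle'' paragraph says nothing about this.

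The missing ingredient is the deterministic comparison lemma of Kenyon--Peres (Lemma~\ref{lem-limsup}): if $f$ has bounded increments, then $\limsup_k\frac1k\bigl(\lambda f(k)-f(\lfloor\lambda k\rfloor)\bigr)\ge0$. Apply it to $f(k)=\log N_\omega(b_1\cdots b_k)$ along each branch. With it your route closes. Stop each branch at its first good level $k\ge K$. The resulting $t$-sum is at most $C\sum_{k\ge K}e^{\epsilon k}m^{-tk}\sum_{u\in\mathcal B^k}N_\omega(u)^\lambda$. Its expectation is $C\sum_{k\ge K}(e^{\epsilon}Zm^{-t})^k$, which tends to $0$ for $t>(\log Z+\epsilon)/\log m$.

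Completed this way, your argument is a genuine alternative to the paper's. The paper uses the same lemma pointwise: it shows $\liminf_k\frac{\log\mu_\omega(Q_k(\mathbf d))}{-k\log m}\le s$ for every $\mathbf d\in\Sigma^\omega$. That also requires the uniform bound $\min_{u\in\mathcal B^k}M(\widetilde\sigma_u\omega)\ge e^{-o(k)}$ of Lemma~\ref{lem-uniform}. Your first-moment version needs no information about the cascade limit for the upper bound. Finally, your fallback of optimising over $\theta$ is unnecessary, since $\theta_k/k\to\lambda$ makes the two averages cancel exactly on typical paths.
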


We give a brief outline of the proof. Set
$\overline L:=\int_\Omega\sum_{b\in\mathcal B}\ell_b(\omega)^\lambda
\,d\mathbb P(\omega)$.  
The weights $\ell_b(\omega)^\lambda/\overline L$ define a multiplicative cascade
on the column tree. The total masses form an $L^2$-bounded martingale
whose limit $M$ is strictly positive almost surely. We use the values of
$M$ on the subtrees to construct a random probability measure
$\mu_\omega$ on $\Sigma^{\omega}$. Integrating these measures against
$M\,d\mathbb P$ gives an invariant ergodic measure for the associated
skew product. The measures of symbolic approximate squares admit an
explicit product formula. For the upper bound, a uniform lower estimate
for the subtree masses and a comparison of averages yield the required
inequality along every admissible sequence. For the lower bound, the
ergodic theorem and the sublinear growth of $\log M$ along typical
trajectories show that the remaining terms in the formula tend to zero.
An approximate-square version of Billingsley's lemma then gives the
Hausdorff dimension.

The theorem includes the following deterministic case. If $\pi$ is concentrated at one symbol $\gamma$, then
$K_\omega(\mathcal A)=K(\mathcal A\setminus\{\gamma\})$ almost
surely. In this case, \eqref{eq-main-dimension} reduces to the classical
Bedford--McMullen formula for this deterministic subcarpet. 
Thus our result extends the fixed-symbol deletion case to independent
deletions along the column tree. The level-dependent construction of Gui
and Li  \cite{Gui2008} and the time-dependent holes of Kong, Sun and Wang \cite{Kong2026}  use different
classes of environments.

The almost sure Hausdorff dimension given by Theorem~\ref{thm-main}
depends only on the probabilities of the columns containing the
forbidden symbol. More precisely, define
$\rho_b:=\sum_{\tau(\gamma)=b}\pi_\gamma$. Indeed, 
$$
\int_\Omega\sum_{b\in\mathcal B}\ell_b(\omega)^\lambda
\,d\mathbb P(\omega)
 =\sum_{b\in\mathcal B}
   \bigl((1-\rho_b)\ell_b^\lambda+\rho_b(\ell_b-1)^\lambda\bigr).
$$

We include an example to illustrate {our} construction.
\begin{ex}\label{ex:tree-model}
Let $m=3$, $n=4$, and
\begin{equation*}
 \mathcal A=
 \{(0,0),(0,1),(0,3),(1,1),(1,2),(2,0),(2,3)\}.
\end{equation*}
Then $\mathcal B=\{0,1,2\}$, $\ell_0=3$, $\ell_1=\ell_2=2$, and
$\lambda=\log3/\log4$. The original  pattern of $K(\mathcal A)$ is shown in
Figure~\ref{fig:case-no}.

\begin{figure}[htbp]
	\captionsetup{labelfont={color=black},textfont={color=black}}
	\color{black}
	\centering
	\begin{tikzpicture}[x=0.933333333cm,y=0.700000000cm]
		\fill[black!33] (0,0) rectangle (1,1);
		\fill[black!33] (0,1) rectangle (1,2);
		\fill[black!33] (0,3) rectangle (1,4);
		\fill[black!33] (1,1) rectangle (2,2);
		\fill[black!33] (1,2) rectangle (2,3);
		\fill[black!33] (2,0) rectangle (3,1);
		\fill[black!33] (2,3) rectangle (3,4);

		\draw[white,line width=0.3pt] (1,0) -- (1,4);
		\draw[white,line width=0.3pt] (2,0) -- (2,4);
		\draw[white,line width=0.3pt] (0,1) -- (3,1);
		\draw[white,line width=0.3pt] (0,2) -- (3,2);
		\draw[white,line width=0.3pt] (0,3) -- (3,3);

		\draw[black!33,line width=0.15pt] (1,0) -- (1,4);
		\draw[black!33,line width=0.15pt] (2,0) -- (2,4);
		\draw[black!33,line width=0.15pt] (0,1) -- (3,1);
		\draw[black!33,line width=0.15pt] (0,2) -- (3,2);
		\draw[black!33,line width=0.15pt] (0,3) -- (3,3);

		\draw[black,line width=0.35pt] (0,0) rectangle (3,4);
	\end{tikzpicture}
	\caption{The original pattern of $K(\mathcal A)$.}
	\label{fig:case-no}
\end{figure}

  Choose each forbidden symbol uniformly from $\mathcal A$, independently
  at different vertices. Thus $\mathbb P(F_v=\gamma)=1/7$ for every
  $v\in\mathcal B^*$ and $\gamma\in\mathcal A$.
  Different column cylinders may have different deletion patterns,
  but the deletion at each step depends only on the preceding column word.
 
  	For example, consider a realization $\omega\in\Omega$ whose forbidden
  	symbols at the first three levels are given by
  	\[
  	\begin{aligned}
  		\text{Level }1:\quad&
  		F_{\varepsilon}(\omega)=(0,1);
  		\\[3pt]
  		\text{Level }2:\quad&
  		F_0(\omega)=(0,3),\quad
  		F_1(\omega)=(1,1),\quad
  		F_2(\omega)=(1,2);
  		\\[3pt]
  		\text{Level }3:\quad&
  		F_{00}(\omega)=(1,1),\quad
  		F_{01}(\omega)=(1,2),\quad
  		F_{02}(\omega)=(2,0),
  		\\
  		&
  		F_{10}(\omega)=(1,2),\quad
  		F_{11}(\omega)=(2,0),\quad
  		F_{12}(\omega)=(2,3),
  		\\
  		&
  		F_{20}(\omega)=(2,0),\quad
  		F_{21}(\omega)=(2,3),\quad
  		F_{22}(\omega)=(0,0).
  	\end{aligned}
  	\]
  	The corresponding first three levels are shown in
  	Figure~\ref{fig:case-tree}.
  	\par 
  
 \begin{figure}[htbp]
 	\captionsetup{labelfont={color=black},textfont={color=black}}
 	\color{black}
 	\centering
 	
 	\begin{minipage}{0.32\textwidth}
 		\centering
 		\scalebox{0.7}{\begin{tikzpicture}[x=1.366666667cm,y=1.025000000cm]
 				\fill[black!33] (0,0) rectangle (1,1);
 				\fill[black!33] (0,3) rectangle (1,4);
 				\fill[black!33] (1,1) rectangle (2,2);
 				\fill[black!33] (1,2) rectangle (2,3);
 				\fill[black!33] (2,0) rectangle (3,1);
 				\fill[black!33] (2,3) rectangle (3,4);
 				
 				\draw[white,line width=0.3pt] (1,0) -- (1,4);
 				\draw[white,line width=0.3pt] (2,0) -- (2,4);
 				\draw[white,line width=0.3pt] (0,1) -- (3,1);
 				\draw[white,line width=0.3pt] (0,2) -- (3,2);
 				\draw[white,line width=0.3pt] (0,3) -- (3,3);
 				
 				\draw[black!33,line width=0.15pt] (1,0) -- (1,4);
 				\draw[black!33,line width=0.15pt] (2,0) -- (2,4);
 				\draw[black!33,line width=0.15pt] (0,1) -- (3,1);
 				\draw[black!33,line width=0.15pt] (0,2) -- (3,2);
 				\draw[black!33,line width=0.15pt] (0,3) -- (3,3);
 				
 				\draw[black,line width=0.35pt] (0,0) rectangle (3,4);
 			\end{tikzpicture}
 		}
 		\par\smallskip {\small Level $1$}
 	\end{minipage}\hfill
 	\begin{minipage}{0.32\textwidth}
 		\centering
 		\scalebox{0.7}{\begin{tikzpicture}[x=0.455555556cm,y=0.256250000cm]
 				\fill[black!33] (0,0) rectangle (1,1);
 				\fill[black!33] (0,1) rectangle (1,2);
 				\fill[black!33] (1,1) rectangle (2,2);
 				\fill[black!33] (1,2) rectangle (2,3);
 				\fill[black!33] (2,0) rectangle (3,1);
 				\fill[black!33] (2,3) rectangle (3,4);
 				
 				\fill[black!33] (0,12) rectangle (1,13);
 				\fill[black!33] (0,13) rectangle (1,14);
 				\fill[black!33] (1,13) rectangle (2,14);
 				\fill[black!33] (1,14) rectangle (2,15);
 				\fill[black!33] (2,12) rectangle (3,13);
 				\fill[black!33] (2,15) rectangle (3,16);
 				
 				\fill[black!33] (3,4) rectangle (4,5);
 				\fill[black!33] (3,5) rectangle (4,6);
 				\fill[black!33] (3,7) rectangle (4,8);
 				\fill[black!33] (4,6) rectangle (5,7);
 				\fill[black!33] (5,4) rectangle (6,5);
 				\fill[black!33] (5,7) rectangle (6,8);
 				
 				\fill[black!33] (3,8) rectangle (4,9);
 				\fill[black!33] (3,9) rectangle (4,10);
 				\fill[black!33] (3,11) rectangle (4,12);
 				\fill[black!33] (4,10) rectangle (5,11);
 				\fill[black!33] (5,8) rectangle (6,9);
 				\fill[black!33] (5,11) rectangle (6,12);
 				
 				\fill[black!33] (6,0) rectangle (7,1);
 				\fill[black!33] (6,1) rectangle (7,2);
 				\fill[black!33] (6,3) rectangle (7,4);
 				\fill[black!33] (7,1) rectangle (8,2);
 				\fill[black!33] (8,0) rectangle (9,1);
 				\fill[black!33] (8,3) rectangle (9,4);
 				
 				\fill[black!33] (6,12) rectangle (7,13);
 				\fill[black!33] (6,13) rectangle (7,14);
 				\fill[black!33] (6,15) rectangle (7,16);
 				\fill[black!33] (7,13) rectangle (8,14);
 				\fill[black!33] (8,12) rectangle (9,13);
 				\fill[black!33] (8,15) rectangle (9,16);
 				
 				\draw[black!33,line width=0.15pt] (3,0) -- (3,16);
 				\draw[black!33,line width=0.15pt] (6,0) -- (6,16);
 				\draw[black!33,line width=0.15pt] (0,4) -- (9,4);
 				\draw[black!33,line width=0.15pt] (0,8) -- (9,8);
 				\draw[black!33,line width=0.15pt] (0,12) -- (9,12);
 				
 				\draw[black,line width=0.35pt] (0,0) rectangle (9,16);
 			\end{tikzpicture}
 		}
 		\par\smallskip {\small Level $2$}
 	\end{minipage}\hfill
 	\begin{minipage}{0.32\textwidth}
 		\centering
 		\scalebox{0.7}{\begin{tikzpicture}[x=0.151851852cm,y=0.064062500cm]
 				
 				\fill[black!33] (0,0) rectangle (1,1);
 				\fill[black!33] (0,1) rectangle (1,2);
 				\fill[black!33] (0,3) rectangle (1,4);
 				\fill[black!33] (1,2) rectangle (2,3);
 				\fill[black!33] (2,0) rectangle (3,1);
 				\fill[black!33] (2,3) rectangle (3,4);
 				
 				\fill[black!33] (0,4) rectangle (1,5);
 				\fill[black!33] (0,5) rectangle (1,6);
 				\fill[black!33] (0,7) rectangle (1,8);
 				\fill[black!33] (1,6) rectangle (2,7);
 				\fill[black!33] (2,4) rectangle (3,5);
 				\fill[black!33] (2,7) rectangle (3,8);
 				
 				\fill[black!33] (3,4) rectangle (4,5);
 				\fill[black!33] (3,5) rectangle (4,6);
 				\fill[black!33] (3,7) rectangle (4,8);
 				\fill[black!33] (4,5) rectangle (5,6);
 				\fill[black!33] (5,4) rectangle (6,5);
 				\fill[black!33] (5,7) rectangle (6,8);
 				
 				\fill[black!33] (3,8) rectangle (4,9);
 				\fill[black!33] (3,9) rectangle (4,10);
 				\fill[black!33] (3,11) rectangle (4,12);
 				\fill[black!33] (4,9) rectangle (5,10);
 				\fill[black!33] (5,8) rectangle (6,9);
 				\fill[black!33] (5,11) rectangle (6,12);
 				
 				\fill[black!33] (6,0) rectangle (7,1);
 				\fill[black!33] (6,1) rectangle (7,2);
 				\fill[black!33] (6,3) rectangle (7,4);
 				\fill[black!33] (7,1) rectangle (8,2);
 				\fill[black!33] (7,2) rectangle (8,3);
 				\fill[black!33] (8,3) rectangle (9,4);
 				
 				\fill[black!33] (6,12) rectangle (7,13);
 				\fill[black!33] (6,13) rectangle (7,14);
 				\fill[black!33] (6,15) rectangle (7,16);
 				\fill[black!33] (7,13) rectangle (8,14);
 				\fill[black!33] (7,14) rectangle (8,15);
 				\fill[black!33] (8,15) rectangle (9,16);
 				
 				\fill[black!33] (0,48) rectangle (1,49);
 				\fill[black!33] (0,49) rectangle (1,50);
 				\fill[black!33] (0,51) rectangle (1,52);
 				\fill[black!33] (1,50) rectangle (2,51);
 				\fill[black!33] (2,48) rectangle (3,49);
 				\fill[black!33] (2,51) rectangle (3,52);
 				
 				\fill[black!33] (0,52) rectangle (1,53);
 				\fill[black!33] (0,53) rectangle (1,54);
 				\fill[black!33] (0,55) rectangle (1,56);
 				\fill[black!33] (1,54) rectangle (2,55);
 				\fill[black!33] (2,52) rectangle (3,53);
 				\fill[black!33] (2,55) rectangle (3,56);
 				
 				\fill[black!33] (3,52) rectangle (4,53);
 				\fill[black!33] (3,53) rectangle (4,54);
 				\fill[black!33] (3,55) rectangle (4,56);
 				\fill[black!33] (4,53) rectangle (5,54);
 				\fill[black!33] (5,52) rectangle (6,53);
 				\fill[black!33] (5,55) rectangle (6,56);
 				
 				\fill[black!33] (3,56) rectangle (4,57);
 				\fill[black!33] (3,57) rectangle (4,58);
 				\fill[black!33] (3,59) rectangle (4,60);
 				\fill[black!33] (4,57) rectangle (5,58);
 				\fill[black!33] (5,56) rectangle (6,57);
 				\fill[black!33] (5,59) rectangle (6,60);
 				
 				\fill[black!33] (6,48) rectangle (7,49);
 				\fill[black!33] (6,49) rectangle (7,50);
 				\fill[black!33] (6,51) rectangle (7,52);
 				\fill[black!33] (7,49) rectangle (8,50);
 				\fill[black!33] (7,50) rectangle (8,51);
 				\fill[black!33] (8,51) rectangle (9,52);
 				
 				\fill[black!33] (6,60) rectangle (7,61);
 				\fill[black!33] (6,61) rectangle (7,62);
 				\fill[black!33] (6,63) rectangle (7,64);
 				\fill[black!33] (7,61) rectangle (8,62);
 				\fill[black!33] (7,62) rectangle (8,63);
 				\fill[black!33] (8,63) rectangle (9,64);
 				
 				\fill[black!33] (9,16) rectangle (10,17);
 				\fill[black!33] (9,17) rectangle (10,18);
 				\fill[black!33] (9,19) rectangle (10,20);
 				\fill[black!33] (10,17) rectangle (11,18);
 				\fill[black!33] (11,16) rectangle (12,17);
 				\fill[black!33] (11,19) rectangle (12,20);
 				
 				\fill[black!33] (9,20) rectangle (10,21);
 				\fill[black!33] (9,21) rectangle (10,22);
 				\fill[black!33] (9,23) rectangle (10,24);
 				\fill[black!33] (10,21) rectangle (11,22);
 				\fill[black!33] (11,20) rectangle (12,21);
 				\fill[black!33] (11,23) rectangle (12,24);
 				
 				\fill[black!33] (9,28) rectangle (10,29);
 				\fill[black!33] (9,29) rectangle (10,30);
 				\fill[black!33] (9,31) rectangle (10,32);
 				\fill[black!33] (10,29) rectangle (11,30);
 				\fill[black!33] (11,28) rectangle (12,29);
 				\fill[black!33] (11,31) rectangle (12,32);
 				
 				\fill[black!33] (12,24) rectangle (13,25);
 				\fill[black!33] (12,25) rectangle (13,26);
 				\fill[black!33] (12,27) rectangle (13,28);
 				\fill[black!33] (13,25) rectangle (14,26);
 				\fill[black!33] (13,26) rectangle (14,27);
 				\fill[black!33] (14,27) rectangle (15,28);
 				
 				\fill[black!33] (15,16) rectangle (16,17);
 				\fill[black!33] (15,17) rectangle (16,18);
 				\fill[black!33] (15,19) rectangle (16,20);
 				\fill[black!33] (16,17) rectangle (17,18);
 				\fill[black!33] (16,18) rectangle (17,19);
 				\fill[black!33] (17,16) rectangle (18,17);
 				
 				\fill[black!33] (15,28) rectangle (16,29);
 				\fill[black!33] (15,29) rectangle (16,30);
 				\fill[black!33] (15,31) rectangle (16,32);
 				\fill[black!33] (16,29) rectangle (17,30);
 				\fill[black!33] (16,30) rectangle (17,31);
 				\fill[black!33] (17,28) rectangle (18,29);
 				
 				\fill[black!33] (9,32) rectangle (10,33);
 				\fill[black!33] (9,33) rectangle (10,34);
 				\fill[black!33] (9,35) rectangle (10,36);
 				\fill[black!33] (10,33) rectangle (11,34);
 				\fill[black!33] (11,32) rectangle (12,33);
 				\fill[black!33] (11,35) rectangle (12,36);
 				
 				\fill[black!33] (9,36) rectangle (10,37);
 				\fill[black!33] (9,37) rectangle (10,38);
 				\fill[black!33] (9,39) rectangle (10,40);
 				\fill[black!33] (10,37) rectangle (11,38);
 				\fill[black!33] (11,36) rectangle (12,37);
 				\fill[black!33] (11,39) rectangle (12,40);
 				
 				\fill[black!33] (9,44) rectangle (10,45);
 				\fill[black!33] (9,45) rectangle (10,46);
 				\fill[black!33] (9,47) rectangle (10,48);
 				\fill[black!33] (10,45) rectangle (11,46);
 				\fill[black!33] (11,44) rectangle (12,45);
 				\fill[black!33] (11,47) rectangle (12,48);
 				
 				\fill[black!33] (12,40) rectangle (13,41);
 				\fill[black!33] (12,41) rectangle (13,42);
 				\fill[black!33] (12,43) rectangle (13,44);
 				\fill[black!33] (13,41) rectangle (14,42);
 				\fill[black!33] (13,42) rectangle (14,43);
 				\fill[black!33] (14,43) rectangle (15,44);
 				
 				\fill[black!33] (15,32) rectangle (16,33);
 				\fill[black!33] (15,33) rectangle (16,34);
 				\fill[black!33] (15,35) rectangle (16,36);
 				\fill[black!33] (16,33) rectangle (17,34);
 				\fill[black!33] (16,34) rectangle (17,35);
 				\fill[black!33] (17,32) rectangle (18,33);
 				
 				\fill[black!33] (15,44) rectangle (16,45);
 				\fill[black!33] (15,45) rectangle (16,46);
 				\fill[black!33] (15,47) rectangle (16,48);
 				\fill[black!33] (16,45) rectangle (17,46);
 				\fill[black!33] (16,46) rectangle (17,47);
 				\fill[black!33] (17,44) rectangle (18,45);
 				
 				\fill[black!33] (18,0) rectangle (19,1);
 				\fill[black!33] (18,1) rectangle (19,2);
 				\fill[black!33] (18,3) rectangle (19,4);
 				\fill[black!33] (19,1) rectangle (20,2);
 				\fill[black!33] (19,2) rectangle (20,3);
 				\fill[black!33] (20,3) rectangle (21,4);
 				
 				\fill[black!33] (18,4) rectangle (19,5);
 				\fill[black!33] (18,5) rectangle (19,6);
 				\fill[black!33] (18,7) rectangle (19,8);
 				\fill[black!33] (19,5) rectangle (20,6);
 				\fill[black!33] (19,6) rectangle (20,7);
 				\fill[black!33] (20,7) rectangle (21,8);
 				
 				\fill[black!33] (18,12) rectangle (19,13);
 				\fill[black!33] (18,13) rectangle (19,14);
 				\fill[black!33] (18,15) rectangle (19,16);
 				\fill[black!33] (19,13) rectangle (20,14);
 				\fill[black!33] (19,14) rectangle (20,15);
 				\fill[black!33] (20,15) rectangle (21,16);
 				
 				\fill[black!33] (21,4) rectangle (22,5);
 				\fill[black!33] (21,5) rectangle (22,6);
 				\fill[black!33] (21,7) rectangle (22,8);
 				\fill[black!33] (22,5) rectangle (23,6);
 				\fill[black!33] (22,6) rectangle (23,7);
 				\fill[black!33] (23,4) rectangle (24,5);
 				
 				\fill[black!33] (24,1) rectangle (25,2);
 				\fill[black!33] (24,3) rectangle (25,4);
 				\fill[black!33] (25,1) rectangle (26,2);
 				\fill[black!33] (25,2) rectangle (26,3);
 				\fill[black!33] (26,0) rectangle (27,1);
 				\fill[black!33] (26,3) rectangle (27,4);
 				
 				\fill[black!33] (24,13) rectangle (25,14);
 				\fill[black!33] (24,15) rectangle (25,16);
 				\fill[black!33] (25,13) rectangle (26,14);
 				\fill[black!33] (25,14) rectangle (26,15);
 				\fill[black!33] (26,12) rectangle (27,13);
 				\fill[black!33] (26,15) rectangle (27,16);
 				
 				\fill[black!33] (18,48) rectangle (19,49);
 				\fill[black!33] (18,49) rectangle (19,50);
 				\fill[black!33] (18,51) rectangle (19,52);
 				\fill[black!33] (19,49) rectangle (20,50);
 				\fill[black!33] (19,50) rectangle (20,51);
 				\fill[black!33] (20,51) rectangle (21,52);
 				
 				\fill[black!33] (18,52) rectangle (19,53);
 				\fill[black!33] (18,53) rectangle (19,54);
 				\fill[black!33] (18,55) rectangle (19,56);
 				\fill[black!33] (19,53) rectangle (20,54);
 				\fill[black!33] (19,54) rectangle (20,55);
 				\fill[black!33] (20,55) rectangle (21,56);
 				
 				\fill[black!33] (18,60) rectangle (19,61);
 				\fill[black!33] (18,61) rectangle (19,62);
 				\fill[black!33] (18,63) rectangle (19,64);
 				\fill[black!33] (19,61) rectangle (20,62);
 				\fill[black!33] (19,62) rectangle (20,63);
 				\fill[black!33] (20,63) rectangle (21,64);
 				
 				\fill[black!33] (21,52) rectangle (22,53);
 				\fill[black!33] (21,53) rectangle (22,54);
 				\fill[black!33] (21,55) rectangle (22,56);
 				\fill[black!33] (22,53) rectangle (23,54);
 				\fill[black!33] (22,54) rectangle (23,55);
 				\fill[black!33] (23,52) rectangle (24,53);
 				
 				\fill[black!33] (24,49) rectangle (25,50);
 				\fill[black!33] (24,51) rectangle (25,52);
 				\fill[black!33] (25,49) rectangle (26,50);
 				\fill[black!33] (25,50) rectangle (26,51);
 				\fill[black!33] (26,48) rectangle (27,49);
 				\fill[black!33] (26,51) rectangle (27,52);
 				
 				\fill[black!33] (24,61) rectangle (25,62);
 				\fill[black!33] (24,63) rectangle (25,64);
 				\fill[black!33] (25,61) rectangle (26,62);
 				\fill[black!33] (25,62) rectangle (26,63);
 				\fill[black!33] (26,60) rectangle (27,61);
 				\fill[black!33] (26,63) rectangle (27,64);
 				
 				\draw[black!33,line width=0.15pt] (9,0) -- (9,64);
 				\draw[black!33,line width=0.15pt] (18,0) -- (18,64);
 				\draw[black!33,line width=0.15pt] (0,16) -- (27,16);
 				\draw[black!33,line width=0.15pt] (0,32) -- (27,32);
 				\draw[black!33,line width=0.15pt] (0,48) -- (27,48);
 				
 				\draw[black,line width=0.35pt] (0,0) rectangle (27,64);
 			\end{tikzpicture}
 		}
 		\par\smallskip {\small Level $3$}
 	\end{minipage}
 	
 \caption{ 	The first three levels of $ K_\omega(\mathcal A) $.}
 	\label{fig:case-tree}
 \end{figure} 
  \FloatBarrier
  
  Under the uniform distribution on $\mathcal A$, we have
  $\rho_0=3/7$ and $\rho_1=\rho_2=2/7$.
  By Theorem~\ref{thm-main}, for $\mathbb P$-a.e.\ $\omega$,
  \[
  \dim_H K_\omega(\mathcal A)
  =\frac{1}{\log3}\log\left[
  \frac37(3\cdot2^\lambda)
  +\frac47(3^\lambda+2^\lambda+1)\right]
  \approx1.49239.
  \] 
\end{ex}

The paper is organized as follows. In Section~\ref{sec-skew-product},
we construct the cascade and the associated skew product, prove
ergodicity, and establish the estimates for the cascade limit.
Section~\ref{sec-proof} contains the proof of Theorem~\ref{thm-main}.

\section{Skew product system}\label{sec-skew-product}

In this section we construct a multiplicative cascade and a random measure
supported on the symbolic survivor set. We then prove the ergodicity of an
associated skew product and establish two estimates for the cascade limit. Throughout this section, we assume \eqref{eq-assumption}. 

Recall that $\Omega=\mathcal A^{\mathcal B^*}$ and $  \mathbb P =\bigotimes\limits_{b\in\mathcal B^*}\pi
=\pi^{\otimes\mathcal B^*}. $ 
For each $b\in\mathcal B$, define the subtree shift by
 \begin{equation}\label{eq:sigma-i}
 \widetilde\sigma_b:\Omega\to\Omega;\qquad
\omega= (\omega_v)_{v\in\mathcal B^*}\mapsto \widetilde\sigma_b(\omega)= (\omega_{bv})_{v\in\mathcal B^*}.
\end{equation} 
For $u=b_1\cdots b_k\in\mathcal B^*$, set
$\widetilde\sigma_u:=\widetilde\sigma_{b_k}\circ\cdots\circ
\widetilde\sigma_{b_1}$  with $\widetilde\sigma_\varepsilon=\operatorname{id}$.  
Then
\begin{equation}\label{eq:F-v}
 F_v(\widetilde\sigma_u\omega)=F_{uv}(\omega),
 \qquad u,v\in\mathcal B^*.
\end{equation}
 Note that for any $u\in\mathcal B^*$ the random variable  $\widetilde\sigma_u(\cdot)$ has the same distribution $\mathbb P$. 

 Recall that $\ell_b(\omega)$ is defined in \eqref{eq-liomega}. Write
\begin{equation}\label{eq-c0}
	\overline L
	:=
	\int_\Omega
	\sum_{b\in\mathcal B}
	\ell_b(\omega)^\lambda
	\,d\Ptr(\omega)
\end{equation}
and, for $b\in\mathcal B$ we  define the random variable 
\begin{equation}\label{def-wi}
	W_b(\omega)
	:=
	\frac{\ell_b(\omega)^\lambda}{\overline L}.
\end{equation}
Then 
\begin{equation}\label{eq:sum=1}
\sum_{b\in\mathcal B}\Etr W_b=1,
\end{equation}  where $\Etr W_b:=\int_{\Omega}W_b(\omega)\,d\Ptr(\omega)$. Set $M_0:=1$, and for $k\geq1$ we define
$$
	M_k(\omega)
	:=
	\sum_{b_1\cdots b_k\in\mathcal B^k}
	\prod_{r=1}^k
	W_{b_r}\bigl(
	\widetilde\sigma_{b_1\cdots b_{r-1}}\omega
	\bigr).
$$ 
{For $k\ge 0$ let
$\mathcal F_k:=\sigma(F_u:u\in\mathcal B^*,\,|u|< k)$
be the  $\sigma$-field generated by the random variables $F_u$ with $u\in\bigcup_{i=0}^{k}\mathcal B^i$. Then $\set{\mathcal F_k}_{k=0}^\infty$ forms a filtration, and}
$(M_k)_{k\geq0}$ is a nonnegative martingale with
respect to $(\mathcal F_k)_{k\geq0}$, with $\mathbb E M_k=1$.


\begin{pro}\label{pro-M-limit}
There exist a random variable $M\in L^2(\Ptr)$ and a Borel set
$\Omega_0\subseteq\Omega$ of full $\Ptr$-measure with the following properties.
	\begin{enumerate}
		\item[(i)]
		$M_k\to M$ $\Ptr$-a.e., and $\Etr M=1$.

		\item[(ii)]
		$\widetilde\sigma_u(\Omega_0)\subseteq\Omega_0$ for every
		$u\in\mathcal B^*$.

		\item[(iii)]
		For every $\omega\in\Omega_0$,
		\begin{equation*}
M(\omega)>0
		\quad\text{and}\quad
		M(\omega)
		=
		\sum_{b\in\mathcal B}
		W_b(\omega)M(\widetilde\sigma_b\omega).
\end{equation*}
	\end{enumerate}
\end{pro}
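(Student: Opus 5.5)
The plan is to treat $(M_k)$ as a Mandelbrot multiplicative cascade on the column tree and prove $L^2$-boundedness directly. First the recursion: splitting off the first letter in the definition of $M_k$ gives
\begin{equation*}
M_{k}(\omega)=\sum_{b\in\mathcal B}W_b(\omega)\,M_{k-1}(\widetilde\sigma_b\omega),\qquad k\ge1.
\end{equation*}
Here $W_b$ depends only on $F_\varepsilon$. The variables $M_{k-1}\circ\widetilde\sigma_b$, $b\in\mathcal B$, depend on disjoint subtrees, so they are independent of each other and of $F_\varepsilon$, and each has the law of $M_{k-1}$.

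For the second moment, set $a:=\sum_b\Etr W_b^2$ and $c:=\sum_{b\ne b'}\Etr(W_bW_{b'})$. Then
\begin{equation*}
\Etr M_k^2=a\,\Etr M_{k-1}^2+c .
\end{equation*}
It suffices to show $a<1$. Since $\ell_b(\omega)\ge1$ for all $b$ by \eqref{eq-assumption}, we have $W_b>0$ everywhere. Since $\#\mathcal B\ge2$, the identity $\sum_b W_b(\omega)=\sum_b\ell_b(\omega)^\lambda/\overline L$ gives $\Etr\sum_bW_b=1$. Pointwise, $\sum_bW_b^2<(\sum_bW_b)^2$ strictly, because at least two terms are positive. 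Hence
\begin{equation*}
a<\Etr\Bigl(\sum_bW_b\Bigr)^2 ,
\end{equation*}
which is not yet enough.

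A better route is to use $W_b\le \max_\omega W_b$ and $\Etr\sum_b W_b=1$. This gives $a\le\max_{b,\omega}W_b(\omega)$, so I need to check that $\ell_b(\omega)^\lambda<\overline L$ for all $b,\omega$. This holds because $\overline L\ge\sum_b(\ell_b-1)^\lambda\ge (\ell_b-1)^\lambda+1$ (using another column with $\ell_{b'}-1\ge1$), together with $\ell_b^\lambda\le(\ell_b-1)^\lambda+1$ by subadditivity of $t\mapsto t^\lambda$ for $\lambda<1$. So $W_b\le1$, and $W_b=1$ only in the borderline equality case. In that case I fall back to the strict pointwise inequality $\sum_b W_b^2<\max_bW_b\sum_bW_b\le\sum_bW_b$, which holds since at least two $W_b$ are positive and each is $\le1$. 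Taking expectations gives $a<1$. This is the step I expect to need the most care.

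It follows that $\sup_k\Etr M_k^2\le \max(1,c/(1-a))<\infty$. By the $L^2$ martingale convergence theorem, $M_k\to M$ a.e. and in $L^2$, so $\Etr M=1$, which proves (i).

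Next, pass the recursion to the limit. On the set $\Omega_1$ where $M_k\to M$ and, for every $u$, $M_k(\widetilde\sigma_u\omega)$ converges, the identity $M=\sum_bW_bM\circ\widetilde\sigma_b$ holds. The set $\Omega_1$ is a countable intersection of full-measure sets, since each $\widetilde\sigma_u$ preserves $\Ptr$.

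For positivity, let $q:=\Ptr(M=0)$. Because $W_b>0$, the recursion gives $\{M=0\}=\bigcap_b\{M\circ\widetilde\sigma_b=0\}$, and independence yields $q=q^{\#\mathcal B}$. Hence $q\in\{0,1\}$, and $\Etr M=1$ forces $q=0$.

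Finally, to secure (ii), define
\begin{equation*}
\Omega_0:=\bigcap_{u\in\mathcal B^*}\widetilde\sigma_u^{-1}\bigl(\Omega_1\cap\{M>0\}\bigr).
\end{equation*}
This set has full measure as a countable intersection of full-measure sets. It is forward invariant, since $\widetilde\sigma_v\omega\in\Omega_0$ whenever $\omega\in\Omega_0$, because $\widetilde\sigma_u\widetilde\sigma_v=\widetilde\sigma_{vu}$. On $\Omega_0$ the identity and positivity from (iii) hold by construction. Measurability is clear, since each $\widetilde\sigma_u$ is continuous.
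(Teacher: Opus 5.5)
Your proof is correct. It shares the paper's skeleton: the bound $\sum_b\Etr W_b^2\le\max_b\ell_b^\lambda/\overline L<1$, the zero--one argument $q=q^{\#\mathcal B}$ for positivity, and $\Omega_0$ built as an intersection of preimages under the subtree shifts. Where it differs is that you replace the paper's black-box step by a direct argument.

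The paper checks hypotheses (a0)--(a1) with $p=2$ and then cites the cascade theorem of Falconer--Jin. That citation supplies a.e.\ convergence, $\Etr M=1$, $M\in L^2$, and the smoothing equation $\Ptr$-a.e. You instead derive the recursion $\Etr M_k^2=a\,\Etr M_{k-1}^2+c$ from independence and $\Etr M_{k-1}=1$, and deduce $L^2$-boundedness from $a<1$. You then obtain the smoothing identity pathwise, by passing to the limit in the finite-level recursion on the set where every $M_k\circ\widetilde\sigma_u$ converges.

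Your route is self-contained; it is the classical Kahane--Peyri\`ere $L^2$ argument. It also makes the non-degeneracy hypothesis (a0) unnecessary for convergence, since $L^2$-boundedness alone gives $\Etr M=1$; positivity still uses $W_b>0$ and $\#\mathcal B\ge2$, exactly as in the paper. The paper's route is shorter. It would also handle weights satisfying only $\sum_b\Etr W_b^p<1$ for some $p\in(1,2)$, but that generality is not needed here because the weights are bounded.

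There is one small slip, which does not affect the conclusion. The ``borderline equality case'' you worry about never occurs, because $t\mapsto t^\lambda$ is strictly subadditive on $(0,\infty)$ for $0<\lambda<1$. Hence $\ell_b^\lambda<(\ell_b-1)^\lambda+1\le\overline L$; this is the paper's inequality $x^\lambda-(x-1)^\lambda<1$. It gives $a\le\max_b\ell_b^\lambda/\overline L<1$ directly.

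Your fallback, as written, is also incorrect: $\sum_bW_b^2<\max_bW_b\sum_bW_b$ fails whenever all the $W_b(\omega)$ coincide. Had a fallback been needed, the right statement is $\sum_bW_b^2\le\sum_bW_b$ pointwise, with equality only where every $W_b=1$. That event cannot be almost sure, since $\Etr\sum_bW_b=1<\#\mathcal B$.
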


\begin{proof}

For each $u\in\mathcal B^*$, the random vector
$(W_b(\widetilde\sigma_u\omega))_{b\in\mathcal B}$ depends only on
$F_u$. Hence these random vectors are independent and identically distributed.
We use the convergence theorem for multiplicative cascades in
\cite[Section~2.3]{FalconerJin2014}. Its hypotheses
\textup{(a0)} and \textup{(a1)} in \cite[(2.1)]{FalconerJin2014}
take the form
\begin{equation}\label{eq:condition-multi-cascades}
 \Ptr\bigl(\#\{b\in\mathcal B:W_b>0\}>1\bigr)>0
 \quad\text{and}\quad
 \sum_{b\in\mathcal B}\Etr(W_b^p)<1
 \quad\text{for some }p>1.
\end{equation}
Since $\ell_b(\omega)\geq\ell_b-1\geq1$ for every $b\in\mathcal B$, {by (\ref{def-wi})
all weights $W_b, b\in\mathcal B$ are strictly positive.} Together with $\#\mathcal B\geq2$,
this proves {the first inequality in (\ref{eq:condition-multi-cascades})}.

We next verify {the second inequality in (\ref{eq:condition-multi-cascades}) with $p=2$}. By \eqref{eq-c0} {and using $\ell_b(\omega)\geq\ell_b-1\geq1$ for every $b\in\mathcal B$, it follows that} 
\begin{equation*}
 \overline L\geq(\ell_b-1)^\lambda+\#\mathcal B-1>\ell_b^\lambda 
\end{equation*}for any $b\in\mathcal B$, 
where the last inequality follows from $\#\mathcal B\geq2$ and
$x^\lambda-(x-1)^\lambda<1$ for $x\geq2$  and  $0<\lambda<1$.
Consequently, {by (\ref{def-wi}) and (\ref{eq:sum=1}) we obtain}
\begin{equation} \label{eq-cascade-L2}
\begin{split}
 \sum_{b\in\mathcal B}\Etr(W_b^2)&=\sum_{b\in\mathcal B}
 \Etr\left( W_b \frac{\ell_b(\omega)^\lambda}{\overline L}    \right)  \\  &\leq\frac{\max_{b\in\mathcal B}\ell_b^\lambda}{\overline L}
       \sum_{b\in\mathcal B}\Etr W_b
 =\frac{\max_{b\in\mathcal B}\ell_b^\lambda}{\overline L}<1.
\end{split}
\end{equation}
By \cite[Section~2.3]{FalconerJin2014}, {the inequalities in (\ref{eq:condition-multi-cascades})} imply that for {$\Ptr$-a.e. $\omega\in\Omega$} the random variable $M_k(\omega)$ converges to a non-trivial
random variable $M(\omega)$ satisfying $ 	\Etr M=1 $,  $ 	\Etr(M^2)<\infty $, and 
\begin{equation}\label{eq-smoothing}
	M(\omega)
	=
	\sum_{b\in\mathcal B}
	W_b(\omega) 
	M(\widetilde\sigma_b\omega)
	\qquad
	\Ptr\text{-a.e.}
\end{equation} 
This proves part~\textup{(i)}.

We now show that $M>0$ almost surely. {Note that} the weights in
\eqref{eq-smoothing} are strictly positive, and the random variables
$M(\widetilde\sigma_b\omega)$, $b\in\mathcal B$, are independent copies
of $M$.  Hence,
\begin{equation*}
 \Ptr(M=0)=\Ptr\left(\bigcap_{b\in\mathcal B}
       \{M(\widetilde\sigma_b\omega)=0\}\right)
     =\Ptr(M=0)^{\#\mathcal B}.
\end{equation*}
Since $\#\mathcal B\geq2$, this implies $\Ptr(M=0) \in\{0,1\}$.
The equality $\Etr M=1$ yields   $\Ptr(M=0)=0$.  So, $M(\omega)>0$ for $\Ptr$-a.e.~$\omega\in\Omega$. 

 Finally, let $\Omega'$ be a Borel set of full $\Ptr$-measure on which
$M_k\to M$, $0<M<\infty$ and \eqref{eq-smoothing} hold, and set
$\Omega_0:=\bigcap_{u\in\mathcal B^*}
\widetilde\sigma_u^{-1}(\Omega')$.
Since $\mathcal B^*$ is countable and every subtree shift preserves
$\Ptr$, the set $\Omega_0$ is Borel and has full $\Ptr$-measure.
If $\omega\in\Omega_0$ and $u,v\in\mathcal B^*$, then
$\widetilde\sigma_v(\widetilde\sigma_u\omega)
=\widetilde\sigma_{uv}\omega\in\Omega'$.
Hence $\widetilde\sigma_u\omega\in\Omega_0$, proving
part~\textup{(ii)}. Part~\textup{(iii)} follows from
$\Omega_0\subseteq\Omega'$.
\end{proof}

We use $M$ to assign transition probabilities to the admissible symbols.  Let  $ \Omega_0\subseteq \Omega $ be given {as} in Proposition \ref{pro-M-limit}.  For $\omega\in\Omega_0$ and $b\in\mathcal B$, we define
\begin{equation}\label{def-qb}
	q_b(\omega)
	:=
	\frac{W_b(\omega)M(\widetilde\sigma_b\omega)}{M(\omega)}.
\end{equation}
{Note that $W_b(\omega)=\frac{\ell_b(\omega)^\lambda}{\overline{L}}>0$ for any $b\in\mathcal B$ and any $\omega\in\Omega$.}
By Proposition~\ref{pro-M-limit}(iii),
$(q_b(\omega))_{b\in\mathcal B}$ is a strictly positive probability
vector. For $a\in\mathcal A$, define
\begin{equation}\label{eq-paom}
	p_a(\omega)
	:=
	\begin{cases}
		\displaystyle
		\frac{q_{\tau(a)}(\omega)}{\ell_{\tau(a)}(\omega)}
		=
		\frac{
			\ell_{\tau(a)}(\omega)^{\lambda-1}
			M(\widetilde\sigma_{\tau(a)}\omega)
		}{
			\overline L\cdot M(\omega)
		},
		& a\neq F_\varepsilon(\omega),
		\\[3mm]
		0,
		& a=F_\varepsilon(\omega).
	\end{cases}
\end{equation}
For each $b\in\mathcal B$, there are exactly $\ell_b(\omega)$ admissible
symbols $a$ with $\tau(a)=b$. Therefore, {by (\ref{eq-paom}) we have}
\begin{equation*}
\sum_{a\in\mathcal A}	p_a(\omega)
=
\sum_{b\in\mathcal B}   	\ell_{b}(\omega)	\frac{q_b(\omega)}{\ell_{b}(\omega)}
=1.
\end{equation*}
Thus $(p_a(\omega))_{a\in\mathcal A}$ is also a probability vector.

For a finite word $\mathbf{d}=d_1\cdots d_k\in\mathcal A^*$, let
{$[\mathbf{d}]:=\{(x_i)\in\Sigma:x_1\ldots x_k=d_1\ldots d_k\}$}  be the cylinder set in $\Sigma=\mathcal A^\N$ generated by $\mathbf d$.
For $\omega\in\Omega_0$, define a probability measure $\mu_\omega$ on $\Sigma$ by 
\begin{equation}\label{eq-mu-cylinder}
	\mu_\omega([d_1\cdots d_k])
	:=
	\prod_{r=1}^k
	p_{d_r}\bigl(
	\widetilde\sigma_{\tau(d_1\cdots d_{r-1})}\omega
	\bigr)
	\qquad\textrm{for}\quad d_1\cdots d_k\in\mathcal A^*.
\end{equation}
Proposition~\ref{pro-M-limit}(ii) ensures that all the factors in
\eqref{eq-mu-cylinder} are well defined. The cylinder weights are
consistent and $\mu_\omega$ is indeed a Borel probability measure on $\Sigma$.
Moreover, $\omega\mapsto\mu_\omega([d_1\cdots d_k])$ is measurable for
every cylinder set, so $(\mu_\omega)_{\omega\in\Omega_0}$ is a measurable
probability kernel. 

{In the next result we show that $\mu_\omega$ is supported on the symbolic survivor set $\Sigma^\omega$.}
\begin{lem}\label{lem-mu-support}
	For every $\omega \in{\Omega_0}$, the measure
	$\mu_\omega$ is supported on $\Sigma^{\omega}$.  Moreover,
	\begin{equation}\label{eq-3}
		\mu_\omega
		=
		\sum_{a\in\mathcal A}
		\mu_\omega([a])\,
		\bigl(\delta_a\otimes
		\mu_{\widetilde\sigma_{\tau(a)}\omega}\bigr).
	\end{equation}
   Here $\delta_a$ denotes the Dirac measure at $a$, and 
$$  (\delta_a\otimes\nu)
\bigl(\{b\mathbf d:\mathbf d\in E\}\bigr):=	\delta_a(\{b\})\,\nu(E) $$	 
	for every $b\in\mathcal A$ and every Borel set $E\subseteq\Sigma$.   
\end{lem}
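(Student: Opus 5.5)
The plan is to check both claims on cylinder sets, using the product formula \eqref{eq-mu-cylinder} and the cocycle identity for the subtree shifts.

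\emph{Step 1: the complement of $\Sigma^\omega$ is null.} By \eqref{eq:symbolic-sigma-omega},
\[
\Sigma\setminus\Sigma^\omega=\bigcup_{k\ge0}\ \bigcup_{d_1\cdots d_k\in\mathcal A^k}\bigl[d_1\cdots d_k F_{\tau(d_1\cdots d_k)}(\omega)\bigr].
\]
This is a countable union of cylinders. For the cylinder indexed by $d_1\cdots d_k$, take the factor $r=k+1$ in \eqref{eq-mu-cylinder}. It equals $p_{a}(\widetilde\sigma_{u}\omega)$ with $u=\tau(d_1\cdots d_k)$ and $a=F_u(\omega)$. By \eqref{eq:F-v} with $v=\varepsilon$, we have $F_u(\omega)=F_\varepsilon(\widetilde\sigma_u\omega)$. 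Hence \eqref{eq-paom} makes this factor $0$. Here Proposition~\ref{pro-M-limit}(ii) guarantees $\widetilde\sigma_u\omega\in\Omega_0$, so $p$ is defined there. Therefore every such cylinder has $\mu_\omega$-measure zero, and countable subadditivity gives $\mu_\omega(\Sigma\setminus\Sigma^\omega)=0$. Since $\Sigma^\omega$ is closed (it is an intersection of clopen conditions), $\mu_\omega$ is supported on it.

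\emph{Step 2: the decomposition \eqref{eq-3}.} Both sides are Borel probability measures on $\Sigma$. Cylinders form a $\pi$-system generating the Borel $\sigma$-field, so it suffices to check equality on $[d_1\cdots d_k]$ for $k\ge1$.

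The right side gives $\mu_\omega([d_1])\,\mu_{\widetilde\sigma_{\tau(d_1)}\omega}([d_2\cdots d_k])$, since only the term $a=d_1$ survives. Expanding the second factor with \eqref{eq-mu-cylinder}, the $r$-th factor is
\[
p_{d_r}\bigl(\widetilde\sigma_{\tau(d_2\cdots d_{r-1})}\widetilde\sigma_{\tau(d_1)}\omega\bigr).
\]
By the definition $\widetilde\sigma_{b_1\cdots b_k}=\widetilde\sigma_{b_k}\circ\cdots\circ\widetilde\sigma_{b_1}$, this is $p_{d_r}(\widetilde\sigma_{\tau(d_1\cdots d_{r-1})}\omega)$. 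The first factor is $p_{d_1}(\omega)$, so the product reproduces $\mu_\omega([d_1\cdots d_k])$. The total masses agree because $\sum_a\mu_\omega([a])=1$.

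\emph{Main obstacle.} The only delicate points are bookkeeping. First, the composition order in $\widetilde\sigma_{uv}=\widetilde\sigma_v\circ\widetilde\sigma_u$ must be tracked correctly. Second, all shifted environments must lie in $\Omega_0$, which Proposition~\ref{pro-M-limit}(ii) ensures. Third, the consistency of the cylinder weights, already asserted in the paper, is what lets us treat $\mu_\omega$ as a genuine measure before applying uniqueness on the $\pi$-system.
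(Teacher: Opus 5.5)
Your proposal is correct and follows essentially the same route as the paper. You write the complement of $\Sigma^\omega$ as a countable union of cylinders, each carrying a zero factor $p_{F_\varepsilon(\widetilde\sigma_u\omega)}(\widetilde\sigma_u\omega)$; you then check \eqref{eq-3} on cylinders via $\widetilde\sigma_{\tau(d_2\cdots d_{r-1})}\circ\widetilde\sigma_{\tau(d_1)}=\widetilde\sigma_{\tau(d_1\cdots d_{r-1})}$ and extend by uniqueness on the generating $\pi$-system, which is exactly the paper's argument, only more explicit about the null cylinders and about the shifted environments staying in $\Omega_0$.
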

 \begin{proof}
Fix $\omega\in\Omega_0$. By \eqref{eq-paom} and
\eqref{eq-mu-cylinder}, a cylinder $[d_1\cdots d_k]$ has positive
$\mu_\omega$-measure if and only if
$d_r\ne F_{\tau(d_1\cdots d_{r-1})}(\omega)$ for all $1\leq r\leq k$.
{Observe by (\ref{eq:symbolic-sigma-omega}) that} the complement of $\Sigma^{\omega}$ is a countable union of cylinders
on which at least one of these conditions fails. Since each such cylinder  has $\mu_\omega$-measure zero, it follows that
$\mu_\omega(\Sigma^{\omega})=1$.

To prove \eqref{eq-3}, we take  
$d_1\cdots d_k\in\mathcal A^k$ with $k\geq2$. By \eqref{eq-mu-cylinder},
\begin{align*}
 \mu_\omega([d_1\cdots d_k])
 &=p_{d_1}(\omega)\prod_{r=2}^k
   p_{d_r}\bigl(\widetilde\sigma_{\tau(d_2\cdots d_{r-1})}
            (\widetilde\sigma_{\tau(d_1)}\omega)\bigr)\\
 &=p_{d_1}(\omega)
   \mu_{\widetilde\sigma_{\tau(d_1)}\omega}([d_2\cdots d_k]),
\end{align*}
where $d_2\cdots d_{r-1}=\varepsilon$ when $r=2$.
	By the definition of $\delta_a\otimes\nu$, the measures on the two sides of \eqref{eq-3} agree on all
cylinders. Since {these} cylinders generate the Borel $\sigma$-algebra of $\Sigma$,
the desired identity follows.
\end{proof}

We next define the skew product that shifts a sequence and its
environment simultaneously. Set
\begin{equation}\label{eq-defe0}
	\mathcal E:=\{(\omega,\mathbf d )\in\Omega\times\Sigma:\mathbf d\in\Sigma^{\omega}\} \quad\text{and} \quad  \mathcal E_0:=\mathcal E\cap(\Omega_0\times\Sigma). 
\end{equation} 
Each condition in the definition of $\Sigma^{\omega}$ depends only
	on finitely many coordinates of $\omega$ and $\mathbf d=d_1d_2\ldots\in\Sigma$, and hence defines
	a Borel subset of $\Omega\times\Sigma$. Since $\mathcal E$ is the
	countable intersection of these sets, it is Borel. Since $\Omega_0$ is Borel,
$\mathcal E_0$ is also Borel. Let $\sigma:\Sigma\to\Sigma$ be the left
shift, given by $\sigma(d_1d_2\cdots)=d_2d_3\cdots$. Define  
\begin{equation}\label{def-t}
 T:\mathcal E\to\mathcal E,\qquad
 T(\omega,\mathbf d):=
 \bigl(\widetilde\sigma_{\tau(d_1)}\omega,\sigma \mathbf d\bigr).
\end{equation}
We first check that $T$ is well defined. If $(\omega,\mathbf d)\in\mathcal E$ {with $\mathbf d=d_1d_2\ldots\in\Sigma^\omega$},
then  $d_{k+2}\ne F_{\tau(d_1\cdots d_{k+1})}(\omega)$  for every
$k\geq0$. By \eqref{eq:F-v}, this is equivalent to
\begin{equation*}
 d_{k+2}\ne F_{\tau(d_2\cdots d_{k+1})}
     (\widetilde\sigma_{\tau(d_1)}\omega)\quad \text{for all }k\geq0.
\end{equation*}
Hence, $T(\omega,\mathbf d)\in\mathcal E$. Moreover,
Proposition~\ref{pro-M-limit} (ii) gives
$T(\mathcal E_0)\subseteq\mathcal E_0$. 

Define a Borel measure $\widetilde\mu$ on $\Omega\times\Sigma$ by
\begin{equation}\label{def-lmu}
	\widetilde\mu(B)
	:=
	\int_{\Omega_0}
	M(\omega)
	\int_{\Sigma}
	\mathbf1_B(\omega,\mathbf d)
	\,d\mu_\omega(\mathbf d)
	\,d\Ptr(\omega)
\end{equation} 
for every Borel set $B\subseteq\Omega\times\Sigma$.    {Since $\mu_\omega$ is a probability measure on $\Sigma^\omega\subset\Sigma$ and $\Etr M=1$ by Proposition \ref{pro-M-limit} (i)}, the measure $\widetilde\mu$ is a probability measure. 
Its environment marginal is the probability measure $\mathbb P_M$
defined by 
$$ d\mathbb P_M:=M \, d\Ptr.  $$   In particular,
$\mathbb P_M(\Omega_0)=1$.

Under the map
{$(\omega, \mathbf d)\mapsto(\omega, \tau(\mathbf d))$}, the push-forward of
$\widetilde\mu$ is the Peyri\`ere measure associated with the cascade
generated by $(W_b)_{b\in\mathcal B}$; see
\cite[(2.3) and~(2.6)]{FalconerJin2014}. The measure
$\widetilde\mu$ also records the row symbols, however, so ergodicity of this projected measure does not by itself establish ergodicity of
$\widetilde\mu$. We prove the latter below.

To prove ergodicity, we first consider the transitions of the
environment coordinate. Define the Markov operator $\mathcal L$ on
bounded measurable functions on $\Omega_0$ by
$$
	\mathcal Lh(\omega)
	:=
	\sum_{b\in\mathcal B}
	q_b(\omega)h(\widetilde\sigma_b\omega).
$$
\begin{lem}\label{lem-kf}
	The probability measure $\mathbb P_M$ is stationary for $\mathcal L$.
	Moreover, every bounded $\mathcal L$-harmonic function is
	$\mathbb P_M$-a.e.\ constant.
\end{lem}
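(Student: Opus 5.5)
Stationarity is a direct computation. For bounded measurable $h$ on $\Omega_0$, the definition \eqref{def-qb} gives
\[
\int \mathcal Lh\, d\mathbb P_M=\int_{\Omega_0}\sum_{b\in\mathcal B}W_b(\omega)M(\widetilde\sigma_b\omega)h(\widetilde\sigma_b\omega)\,d\Ptr(\omega).
\]
Two facts then finish this part. First, $W_b(\omega)$ depends only on $F_\varepsilon$. Second, $\widetilde\sigma_b\omega$ depends only on $(F_{bv})_{v}$, which is independent of $F_\varepsilon$ and has law $\Ptr$. Hence each summand factorizes as $\Etr W_b\cdot\Etr(Mh)$. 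By \eqref{eq:sum=1} the sum equals $\int h\,d\mathbb P_M$, so $\mathbb P_M$ is stationary.

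For the harmonic part, let $h$ be bounded with $\mathcal Lh=h$ $\mathbb P_M$-a.e. Since $M>0$ on $\Omega_0$, $\mathbb P_M$ and $\Ptr$ are equivalent, so I may work $\Ptr$-a.e. I will use the martingale structure along the tree. Set $g:=Mh$, which lies in $L^2(\Ptr)$ because $M\in L^2$. The harmonicity reads
\[
M(\omega)h(\omega)=\sum_b W_b(\omega)M(\widetilde\sigma_b\omega)h(\widetilde\sigma_b\omega),
\quad\text{i.e.}\quad g=\sum_b W_b\, g\circ\widetilde\sigma_b .
\]
Iterating $k$ times gives
\[
g(\omega)=\sum_{u\in\mathcal B^k}\Bigl(\prod_{r=1}^k W_{u_r}(\widetilde\sigma_{u_1\cdots u_{r-1}}\omega)\Bigr)g(\widetilde\sigma_u\omega).
\]
The product weights are $\mathcal F_k$-measurable, while the variables $g(\widetilde\sigma_u\omega)$ for $u\in\mathcal B^k$ are i.i.d.\ with law that of $g$ and are independent of $\mathcal F_k$. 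Taking conditional expectation therefore gives $\Etr(g\mid\mathcal F_k)=M_k\,\Etr g$. Since $\mathcal F_k\uparrow\sigma(F_u:u\in\mathcal B^*)$, which is the full $\sigma$-algebra, the martingale convergence theorem gives $\Etr(g\mid\mathcal F_k)\to g$ a.e. By Proposition~\ref{pro-M-limit}(i), $M_k\to M$ a.e. Hence $Mh=M\,\Etr(Mh)$, and dividing by $M>0$ shows $h$ is a.e.\ equal to the constant $\int h\,d\mathbb P_M$.

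The main delicate point is justifying the conditional expectation identity. I need integrability of each term, which holds since $W$ is bounded and $g\in L^1$, and I need the independence of $(g\circ\widetilde\sigma_u)_{u\in\mathcal B^k}$ from $\mathcal F_k$. The latter holds because $g\circ\widetilde\sigma_u$ is measurable with respect to $\{F_{uv}\}_v$, and these coordinates are disjoint from the coordinates of length $<k$. There is one further subtlety: harmonicity holds only a.e., and it must be iterated along $\widetilde\sigma_u$. This works because each $\widetilde\sigma_u$ preserves $\Ptr$, so the null exceptional set pulled back by countably many maps stays null, as in the construction of $\Omega_0$ in Proposition~\ref{pro-M-limit}.
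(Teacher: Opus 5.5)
Your proof is correct. The stationarity computation is the same as the paper's, but for the harmonic part you take a genuinely different route.

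\textbf{The paper's argument.} It works at the level of the Markov operator. Jensen gives $(\mathcal Lh)^2\le\mathcal L(h^2)$, and stationarity forces equality $\mathbb P_M$-a.e. Strict positivity of the $q_b$ then yields $h\circ\widetilde\sigma_b=h$ for every $b$. Finally, for $b\ne c$ the functions $h\circ\widetilde\sigma_b$ and $h\circ\widetilde\sigma_c$ are independent copies of $h$ that coincide, so the variance of $h$ vanishes.

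\textbf{Your argument.} You rewrite harmonicity as the smoothing equation $g=\sum_b W_b\,g\circ\widetilde\sigma_b$ for $g=Mh$. You iterate it along the tree and compute $\mathbb E(g\mid\mathcal F_k)=M_k\,\mathbb E g$. L\'evy's upward theorem together with $M_k\to M$ then gives $g=(\mathbb E g)M$. The points you flag are exactly the ones that need checking, and they hold:
\begin{itemize}
\item the product weights are $\mathcal F_k$-measurable, since they involve only $F_v$ with $|v|<k$;
\item $g\circ\widetilde\sigma_u$ for $|u|=k$ is measurable with respect to coordinates of length $\ge k$, hence independent of $\mathcal F_k$;
\item the a.e.\ iteration is legitimate because $\bigcup_u\widetilde\sigma_u^{-1}(N)$ is null.
\end{itemize}

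\textbf{What each approach buys.} The paper's proof is a generic stationary-Markov-operator argument. Beyond the existence of $M>0$ satisfying the fixed-point identity, it uses only stationarity, strict positivity of the $q_b$, and independence of two sibling subtrees (so it uses $\#\mathcal B\ge2$ explicitly). It never invokes the convergence $M_k\to M$ or the filtration. Yours leans on the cascade martingale structure, and in exchange proves more. It shows that every integrable solution of the smoothing equation is a multiple of $M$. Consequently every $\mathcal L$-harmonic $h\in L^1(\mathbb P_M)$, not just every bounded one, is constant, and you identify the constant explicitly as $\int h\,d\mathbb P_M$.
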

\begin{proof}

We first prove the $\mathcal L$-stationarity of $\mathbb P_M$. {Note that $\mathbb P_M(\Omega_0)=1$.}  Let $g$ be a bounded measurable function on
$\Omega_0$. Then by \eqref{def-qb} { and using $d\mathbb P_M=M d\mathbb P$},
\begin{equation*}
 \int_{\Omega_0}\mathcal Lg\,d\mathbb P_M
 =\sum_{b\in\mathcal B}\Etr\left[
   {W_b\cdot (M\cdot g)\circ\widetilde\sigma_b }\right].
\end{equation*}
For each $b\in\mathcal B$, {by (\ref{def-wi})} the weight $W_b$ depends only on the root
variable $F_\varepsilon$, whereas
$(M\cdot g)\circ\widetilde\sigma_b$ depends only on the subtree rooted at $b$.
These random variables are independent. Since
$\widetilde\sigma_b\omega$ has the distribution $\Ptr$ and
$\sum_b\Etr W_b=1$, we obtain
\begin{equation*}
 \int_{\Omega_0}\mathcal Lg\,d\mathbb P_M
 =\sum_{b\in\mathcal B}(\Etr W_b)\,\Etr(M\cdot g)
 =\Etr(M\cdot g)
 =\int_{\Omega_0}g\,d\mathbb P_M.
\end{equation*}
Thus $\mathbb P_M$ is stationary for $\mathcal L$.

Now let $h$ be a bounded real-valued function satisfying
$\mathcal Lh=h$ $\mathbb P_M$-a.e. By Jensen's inequality,
\begin{equation}\label{eq-lh}
 (\mathcal Lh)^2={\left(\sum_{b\in\mathcal B}q_b(\omega)h(\tilde\sigma_b\omega)\right)^2\leq \sum_{b\in\mathcal B}q_b(\omega)h^2(\tilde\sigma_b\omega)}=\mathcal L(h^2).
\end{equation}
Using  $\mathcal Lh=h$ and the $\mathcal L$-stationarity of $\mathbb P_M$, we obtain
\begin{equation*}
 \int_{\Omega_0}(\mathcal Lh)^2\,d\mathbb P_M
 =\int_{\Omega_0}h^2\,d\mathbb P_M
 =\int_{\Omega_0}\mathcal L(h^2)\,d\mathbb P_M.
\end{equation*}
Hence equality holds in \eqref{eq-lh} almost everywhere.  Since all the weights
$q_b$ are strictly positive, the equality in Jensen's inequality implies
that $h(\widetilde\sigma_b\omega)$ has the same value for every
$b\in\mathcal B$. Since $h=\mathcal Lh$, this common value is
$h(\omega)$. Thus,
\begin{equation*}
h(\widetilde\sigma_b\omega)=h(\omega) 
 \qquad\forall b\in\mathcal B
\end{equation*}
for $\mathbb P_M$-a.e.\ $\omega\in\Omega_0$.

By Proposition~\ref{pro-M-limit}, we have $M(\omega)>0$ $\Ptr$-a.e.~$\omega\in\Omega_0$.
Then $\mathbb P_M$ and $\Ptr$ are equivalent, and the preceding
identities also hold $\Ptr$-a.e.~$\omega\in\Omega_0$. {So, under $\Ptr$ for distinct $b,c\in\mathcal B$,} the random variables
$h\circ\widetilde\sigma_b$ and $h\circ\widetilde\sigma_c$ are independent,
and each has the same distribution as $h$. Since both are equal to
$h$ $\Ptr$-a.e., we obtain
\begin{equation*}
	0=\Etr\bigl[(h\circ\widetilde\sigma_b
	-h\circ\widetilde\sigma_c)^2\bigr]
	=2\bigl(\Etr(h^2)-(\Etr h)^2\bigr).
\end{equation*}
{Again, by using Jensen's inequality we conclude that} $h$ is a constant $\Ptr$-a.e., and hence also
$\mathbb P_M$-a.e.  
\end{proof}

\begin{pro}\label{pro-ergodic}
The measure $\widetilde\mu$ is concentrated on $\mathcal E_0$, and is
$T$-invariant and ergodic.
\end{pro}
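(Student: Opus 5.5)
Concentration on $\mathcal E_0$ is immediate. By \eqref{def-lmu}, $\widetilde\mu$ only charges $\Omega_0\times\Sigma$. By Lemma~\ref{lem-mu-support}, $\mu_\omega(\Sigma^\omega)=1$ for each $\omega\in\Omega_0$. Hence $\widetilde\mu(\mathcal E_0)=\int_{\Omega_0}M\,\mu_\omega(\Sigma^\omega)\,d\Ptr=1$.

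\textbf{Invariance.} I will check $\widetilde\mu(T^{-1}B)=\widetilde\mu(B)$ for every bounded measurable $g$ on $\mathcal E_0$ in the form $\int g\circ T\,d\widetilde\mu=\int g\,d\widetilde\mu$. By the decomposition \eqref{eq-3}, for $\omega\in\Omega_0$ we have
\[
\int g(T(\omega,\mathbf d))\,d\mu_\omega(\mathbf d)=\sum_{a\in\mathcal A}p_a(\omega)\int g(\widetilde\sigma_{\tau(a)}\omega,\mathbf d')\,d\mu_{\widetilde\sigma_{\tau(a)}\omega}(\mathbf d').
\]
Summing over $a$ within each column $b$, and using that the $\ell_b(\omega)$ admissible symbols each carry weight $q_b(\omega)/\ell_b(\omega)$, this becomes $\sum_b q_b(\omega)G(\widetilde\sigma_b\omega)=\mathcal LG(\omega)$, where $G(\omega):=\int g(\omega,\cdot)\,d\mu_\omega$. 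Integrating against $\mathbb P_M$ and applying the stationarity part of Lemma~\ref{lem-kf} gives $\int G\,d\mathbb P_M=\int g\,d\widetilde\mu$.

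\textbf{Ergodicity.} Let $B\subseteq\mathcal E_0$ be $T$-invariant, and put $h(\omega):=\mu_\omega(B_\omega)$, where $B_\omega$ is the fibre of $B$ over $\omega$. I will show $h\in\{0,1\}$ a.e.
\begin{itemize}
\item[(1)] \emph{$h$ is harmonic.} Invariance $\mathbf 1_B=\mathbf 1_B\circ T$ (mod $\widetilde\mu$) and the computation above give $h=\mathcal Lh$ for $\mathbb P_M$-a.e.\ $\omega$. Here I must be careful that "a.e.\ invariant" transfers to fibres for a.e.\ $\omega$; this follows by Fubini. By Lemma~\ref{lem-kf}, $h\equiv c$ a.e.
\item[(2)] \emph{Iterating to depth $k$.} Applying $\mathbf 1_B=\mathbf 1_B\circ T^k$ and iterating \eqref{eq-3} $k$ times gives, for every admissible word $\mathbf u=d_1\cdots d_k$,
\[
\mu_\omega\bigl(B_\omega\cap[\mathbf u]\bigr)=\mu_\omega([\mathbf u])\,h\bigl(\widetilde\sigma_{\tau(\mathbf u)}\omega\bigr).
\]
Since $\Omega_0$ is invariant under every $\widetilde\sigma_u$ and $\Ptr$ is preserved by them, $h(\widetilde\sigma_u\omega)=c$ for all $u\in\mathcal B^*$ simultaneously, for a.e.\ $\omega$. (This uses $\mathbb P_M\sim\Ptr$ and countability of $\mathcal B^*$.) Hence $\mu_\omega(B_\omega\cap[\mathbf u])=c\,\mu_\omega([\mathbf u])$ for every cylinder.
\item[(3)] \emph{Conclusion.} Cylinders generate the Borel $\sigma$-algebra, so $\mu_\omega(B_\omega\cap E)=c\,\mu_\omega(E)$ for all Borel $E$. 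Taking $E=B_\omega$ yields $c=c^2$, so $c\in\{0,1\}$ and $\widetilde\mu(B)=c\in\{0,1\}$.
\end{itemize}

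The main obstacle is step (2). It requires justifying the fibrewise identity $\mathbf 1_{B_\omega}(\mathbf u\mathbf d')=\mathbf 1_{B_{\widetilde\sigma_{\tau(\mathbf u)}\omega}}(\mathbf d')$ for $\mu$-a.e.\ point, given only a.e.\ invariance. To avoid null-set issues, I would first replace $B$ by the strictly invariant set $\bigcap_{N}\bigcup_{k\ge N}T^{-k}B$, which differs from $B$ by a null set. With strict invariance $T^{-1}B=B$, the identity holds pointwise and the computation in (2) is exact.
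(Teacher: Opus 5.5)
Your proof is correct and follows the paper's argument. Concentration comes from Lemma~\ref{lem-mu-support}, invariance from \eqref{eq-3} together with the $\mathcal L$-stationarity of $\mathbb P_M$, and ergodicity from showing that the fibre mass $h(\omega)=\mu_\omega(B_\omega)$ is $\mathcal L$-harmonic, hence constant by Lemma~\ref{lem-kf}, and then iterating \eqref{eq-3} to obtain $\mu_\omega(B_\omega\cap[\mathbf u])=c\,\mu_\omega([\mathbf u])$.

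The differences are minor. The paper integrates this identity into $\mathbb E_{\widetilde\mu}(\mathbf 1_A\mid\mathcal H_n)=c$ and concludes by martingale convergence; you instead conclude fibrewise, using uniqueness of measures that agree on cylinders together with $c=c^2$. Your replacement of $B$ by the strictly invariant set $\bigcap_N\bigcup_{k\ge N}T^{-k}B$ also makes the fibre identities exact, rather than relying on an a.e.\ Fubini argument.
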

\begin{proof}
By {(\ref{def-lmu}) and} Lemma~\ref{lem-mu-support},
\begin{equation*}
 \widetilde\mu\bigl((\Omega\times\Sigma)\setminus\mathcal E_0\bigr)
 =\int_{\Omega_0}
 M(\omega)\mu_\omega(\Sigma\setminus\Sigma^{\omega})\,d\Ptr(\omega)=0.
\end{equation*}
Thus $\widetilde\mu$ is concentrated on $\mathcal E_0$.

We next prove that $\widetilde\mu$ is $T$-invariant. Let $f$ be a bounded
measurable function on $\mathcal E_0$, extended by zero to
$\Omega_0\times\Sigma$. We define
$$ g_f(\omega):=\int_\Sigma f(\omega,\mathbf d)\,d\mu_\omega(\mathbf d).  $$ 
The function $g_f$ is bounded and measurable, and
$\int f\,d\widetilde\mu=\int_{\Omega_0}g_f\,d\mathbb P_M$.
By \eqref{eq-3}, {(\ref{def-t})} and Lemma~\ref{lem-kf},
\begin{align*}
 \int f\circ T\,d\widetilde\mu&{=\int_{\Omega_0}\int_{\Sigma}f(\tilde\sigma_{\tau(d_1)}\omega, \sigma\mathbf d)d\mu_\omega(\mathbf d)d\mathbb P_M(\omega)}\\
 &=\int_{\Omega_0}\sum_{a\in\mathcal A}p_a(\omega)
   \int_\Sigma f(\widetilde\sigma_{\tau(a)}\omega,\mathbf d)
       \,d\mu_{\widetilde\sigma_{\tau(a)}\omega}(\mathbf d)
       \,d\mathbb P_M(\omega)\\
 &=\int_{\Omega_0}\sum_{b\in\mathcal B}
   q_b(\omega)g_f(\widetilde\sigma_b\omega)\,d\mathbb P_M(\omega)\\
 &=\int_{\Omega_0}\mathcal Lg_f\,d\mathbb P_M
 =\int_{\Omega_0}g_f \, d\mathbb P_M
 =\int f\,d\widetilde\mu.
\end{align*}
This proves the $T$-invariance of $\widetilde{\mu}$.

It remains to prove the ergodicity. {Let $A\subseteq\mathcal E_0$ be a Borel set satisfying  $\widetilde\mu(T^{-1}A \triangle A) = 0$. Then
 $\mathbf1_A(\omega,\mathbf d)=\mathbf1_A\circ T(\omega, \mathbf d)$ for $\widetilde\mu$-a.e.~$(\omega,\mathbf d)\in\mathcal E_0$.}
Define
$$g_A(\omega):=\int_\Sigma\mathbf1_A(\omega,\mathbf d)\, d\mu_\omega(\mathbf d)\qquad \textrm{for}\quad\omega\in\Omega_0.   $$  
Using \eqref{eq-3} again {and $\mathbf1_A=\mathbf1_A\circ T$ for $\widetilde\mu$-a.e.}, we obtain
\begin{equation*}
 g_A(\omega)
 =\int_\Sigma\mathbf1_A(T(\omega,\mathbf d))\,d\mu_\omega(\mathbf d)
 {=\sum_{b\in\mathcal B}q_b(\omega)\int_{\Sigma}\mathbf 1_A(\tilde\sigma_b\omega,\mathbf d)d\mu_{\tilde\sigma_b\omega}(\mathbf d)}=\mathcal Lg_A(\omega)
 \qquad\mathbb P_M\text{-a.e.}
\end{equation*}
By Lemma~\ref{lem-kf}, there is a constant $c\in[0,1]$ such that
\begin{equation}\label{eq-gC0}
 g_A(\omega)=c\qquad\mathbb P_M\text{-a.e. } \omega\in\Omega_0.
\end{equation}
We will show that $\mathbf1_A=c$ almost everywhere.

For $n\geq1$, write
${h_n}(\omega,\mathbf d):=\widetilde\sigma_{\tau(d_1\cdots d_n)}\omega$ for $(\omega,\mathbf d)\in\mathcal E_0$,
and let $\mathcal H_n$ be the $\sigma$-algebra on $\mathcal E_0$
generated by the environment coordinate and the first $n$ symbols of
$\mathbf d$. Then $(\mathcal H_n)$ is increasing and generates the Borel
$\sigma$-algebra of $\mathcal E_0$. The function $g_A\circ{h_n}$ is
$\mathcal H_n$-measurable. Moreover, ${h_n}$ is the environment
coordinate of $T^n(\omega,\mathbf d)$. Since $\widetilde\mu$ is $T$-invariant
and its environment marginal is $\mathbb P_M$, we have
$\widetilde\mu\circ{h_n^{-1}}=\mathbb P_M$. Hence \eqref{eq-gC0} gives
\begin{equation}\label{eq-YnC}
 g_A\circ{h_n}=c\qquad\widetilde\mu\text{-a.e.}
\end{equation}

We claim that
\begin{equation}\label{eq-cond}
 \mathbb E_{\widetilde\mu}(\mathbf1_A\mid\mathcal H_n)
 =g_A\circ{h_n}
 \qquad\widetilde\mu\text{-a.e.}
\end{equation}
To prove the claim, it suffices to test the identity on sets of the form
$B=\mathcal E_0\cap(F\times[a_1\cdots a_n])$, where
$F\subseteq\Omega_0$ is Borel {and $a_1\ldots a_n\in\mathcal A^n$}. These sets form a generating
$\pi$-system for $\mathcal H_n$.  
Since $\mathbf1_A=\mathbf1_A\circ T^n$ almost everywhere, the iterated
form of \eqref{eq-3} gives
\begin{align*}
 \int_B\mathbf1_A\,d\widetilde\mu
 &=\int_B\mathbf1_A\circ T^n\,d\widetilde\mu\\
 &=\int_F\mu_\omega([a_1\cdots a_n])
       g_A(\widetilde\sigma_{\tau(a_1\ldots a_n)}\omega)\,d\mathbb P_M(\omega)\\
&=	\int_F \int_{[a_1\cdots a_n]} \mathbf 1_{[a_1\cdots a_n]}(\mathbf d) 
g_A(\widetilde\sigma_{\tau(a_1\ldots a_n)}\omega)
\, d\mu_\omega(\mathbf d) d\mathbb P_M(\omega)\\
 &=\int_B g_A\circ{h_n}\,d\widetilde\mu.
\end{align*}
This proves \eqref{eq-cond}.

By the martingale convergence theorem,
$\mathbb E_{\widetilde\mu}(\mathbf1_A\mid\mathcal H_n)
\to\mathbf1_A$ $\widetilde\mu$-a.e.
Combining this with \eqref{eq-YnC} and \eqref{eq-cond}, we obtain
$\mathbf1_A=c$ almost everywhere. Thus $c\in\{0,1\}$ and
$\widetilde\mu(A)\in\{0,1\}$, proving ergodicity.
\end{proof}

We now estimate the values of $M$ on subtrees. The first estimate
controls $\log M$ along $\widetilde\mu$-almost every orbit and will
be used to prove the lower bound for the Hausdorff dimension of a random survivor set {of $K_\omega(\mathcal A)$}.
 \begin{lem}\label{lem-M0}
	For $\widetilde\mu$-a.e. $(\omega,\mathbf d)\in\mathcal E_0$,
	\begin{equation*}
\lim_{k\to\infty}
	\frac{
		\log M\bigl(
		\widetilde\sigma_{\tau(d_1\cdots d_k)}\omega
		\bigr)
	}{k}
	=0.
\end{equation*}
\end{lem}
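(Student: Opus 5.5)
The key observation is that $\log M(\widetilde\sigma_{\tau(d_1\cdots d_k)}\omega)=\phi\circ T^k(\omega,\mathbf d)$, where $\phi(\omega,\mathbf d):=\log M(\omega)$ depends only on the environment coordinate. By \eqref{def-t}, the environment coordinate of $T^k(\omega,\mathbf d)$ is exactly $\widetilde\sigma_{\tau(d_1\cdots d_k)}\omega$. Since $\widetilde\mu$ is $T$-invariant by Proposition~\ref{pro-ergodic}, the statement reduces to the standard fact that $\phi\circ T^k/k\to0$ almost surely whenever $\phi$ is finite a.e. and suitably integrable. I will use the following form: if $\phi\in L^1(\widetilde\mu)$ (even $\phi^+,\phi^-$ separately integrable suffices), then $\sum_k\widetilde\mu(|\phi\circ T^k|>\epsilon k)=\sum_k\widetilde\mu(|\phi|>\epsilon k)\le \epsilon^{-1}\int|\phi|\,d\widetilde\mu<\infty$. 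Borel--Cantelli then gives $|\phi\circ T^k|\le\epsilon k$ eventually, and letting $\epsilon\downarrow0$ along a countable sequence finishes the proof. An alternative is to apply Birkhoff's theorem to $\phi$ and take differences of consecutive averages, $\phi\circ T^k/k=S_{k+1}/k-S_k/k$.

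So the main task is to show $\int|\log M|\,d\widetilde\mu=\int_{\Omega_0}M|\log M|\,d\Ptr<\infty$. The positive part is easy: $M\log^+M\le M^2$, and $M\in L^2(\Ptr)$ by Proposition~\ref{pro-M-limit}.

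The negative part, $\int M\log^-M\,d\Ptr$, is the main obstacle, but it is mild because the integrand is bounded. On $\{M<1\}$ we have $M\log(1/M)\le e^{-1}$, so $\int M\log^-M\,d\Ptr\le e^{-1}<\infty$. The weighting by $M$ in $\widetilde\mu$ is exactly what removes any need for small-ball estimates on $M$ near $0$. Positivity of $M$ on $\Omega_0$ from Proposition~\ref{pro-M-limit}(iii) ensures that $\phi$ is finite everywhere on $\mathcal E_0$, where $\widetilde\mu$ is concentrated.

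Hence $\phi\in L^1(\widetilde\mu)$, and the Borel--Cantelli argument above applies. It uses only invariance, not ergodicity. This gives the limit $0$ for $\widetilde\mu$-a.e. $(\omega,\mathbf d)\in\mathcal E_0$, using $T(\mathcal E_0)\subseteq\mathcal E_0$ so that every iterate stays in the domain where $M>0$.
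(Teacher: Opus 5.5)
Your proposal is correct and follows essentially the same route as the paper. The paper also bounds $\int M|\log M|\,d\Ptr\le e^{-1}+\int_{\{M\ge1\}}M^2\,d\Ptr<\infty$, and then uses $T$-invariance and Borel--Cantelli via $\sum_k\widetilde\mu(Y>\delta k)\le\delta^{-1}\int Y\,d\widetilde\mu$, intersecting over rational $\delta>0$.
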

\begin{proof}
	By Proposition~\ref{pro-M-limit}, $M>0$ almost surely and
$M\in L^2(\mathbb P)$. Define
$Y(\omega,\mathbf d):=|\log M(\omega)|$ {for $(\omega,\mathbf d)\in\mathcal E_0$.} 
Since the environment marginal of $\widetilde\mu$ is $\mathbb P_M$,
\begin{equation*}
 \int_{\mathcal E_0}Y\,d\widetilde\mu{=\int_{\Omega_0}|\log M|\,d\mathbb P_M}
 =\int_{\Omega_0} M|\log M|\,d\Ptr
 \leq e^{-1}+\int_{\{M\geq1\}}M^2\,d\Ptr<\infty.
\end{equation*}
Here we used $t|\log t|\leq e^{-1}$ for $0<t\leq1$,
$t\log t\leq t^2$ for $t\geq1$, and
$M\in L^2(\Ptr)$ from Proposition~\ref{pro-M-limit}.
Fix $\delta>0$. By Proposition~\ref{pro-ergodic},
$\widetilde\mu$ is $T$-invariant. Hence,
\begin{align*}
 \sum_{k=1}^\infty\widetilde\mu(Y\circ T^k>\delta k)
 &=\sum_{k=1}^\infty\widetilde\mu(Y>\delta k)\\
 &=\int_{\mathcal E_0}\sum_{k=1}^\infty
       \mathbf1_{\{Y>\delta k\}}\,d\widetilde\mu
 \leq\frac1\delta\int_{\mathcal E_0}Y\,d\widetilde\mu<\infty.
\end{align*}
The Borel--Cantelli lemma therefore implies that, for
$\widetilde\mu$-a.e.\ $(\omega,\mathbf d)\in\mathcal E_0$, we have
$Y(T^k(\omega,\mathbf d))\leq\delta k$ for all sufficiently large $k$.  Taking the intersection of these full-measure sets over positive
rational $\delta$ yields $Y\circ T^k/k\to0$ almost surely.
Since
$Y\circ T^k(\omega,\mathbf d)
=|\log M(\widetilde\sigma_{\tau(d_1\cdots d_k)}\omega)|$,
the lemma follows.
\end{proof}

To prove the upper bound for the Hausdorff dimension of the survivor
set $K_\omega(\mathcal A)$, we need a lower bound for $M(\widetilde\sigma_u\omega)$
that is uniform over all words of a given length.
\begin{lem}\label{lem-uniform}
	For $\Ptr$-a.e.\ $\omega\in{\Omega_0}$,
	\begin{equation}\label{eq-uniform}
		\limsup_{k\to\infty}
		\frac{1}{k}
		\max_{u\in\mathcal B^k}
		\bigl(-\log M(\widetilde\sigma_u\omega)\bigr)
		\leq 0.
	\end{equation}
\end{lem}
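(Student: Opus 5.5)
The goal is to show that, for each $\delta>0$, almost surely $M(\widetilde\sigma_u\omega)\ge e^{-\delta k}$ for every $u\in\mathcal B^k$ once $k$ is large. The natural route is a union bound combined with Borel--Cantelli. For each $u\in\mathcal B^k$ the random variable $M\circ\widetilde\sigma_u$ has the law of $M$. Hence
\begin{equation*}
 \Ptr\Bigl(\min_{u\in\mathcal B^k}M(\widetilde\sigma_u\omega)<e^{-\delta k}\Bigr)
 \le (\#\mathcal B)^k\,\Ptr\bigl(M<e^{-\delta k}\bigr).
\end{equation*}
It therefore suffices to prove a left-tail estimate for $M$ of the form $\Ptr(M<\varepsilon)\le C\varepsilon^{s}$ for every $s>0$, with $C=C(s)$. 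Choosing $s$ with $\delta s>\log\#\mathcal B$ makes the right-hand side summable in $k$. Borel--Cantelli then gives the claim for this $\delta$, and intersecting over rational $\delta>0$ yields \eqref{eq-uniform}.

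The main step is the small-ball estimate: $\Ptr(M<\varepsilon)=O(\varepsilon^s)$ for every $s$. I would derive it from the smoothing equation in Proposition~\ref{pro-M-limit}(iii), using that the weights are bounded below deterministically: $W_b\ge w_0:=\overline L^{-1}>0$ for all $b$, since $\ell_b(\omega)\ge1$. Then $M\ge w_0\max_b M(\widetilde\sigma_b\omega)$, with the $M(\widetilde\sigma_b\cdot)$, $b\in\mathcal B$, i.i.d.\ copies of $M$. Writing $\phi(\varepsilon):=\Ptr(M<\varepsilon)$, this gives $\phi(\varepsilon)\le\phi(\varepsilon/w_0)^{N}$ with $N=\#\mathcal B\ge2$.

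Since $M>0$ a.s., we have $\phi(\eta)\to0$ as $\eta\to0$. Fix $\eta_0$ with $\phi(\eta_0)\le 1/2$. Iterating the recursion $j$ times gives
\begin{equation*}
 \phi(\eta_0 w_0^{j})\le 2^{-N^j}.
\end{equation*}
This is a doubly exponential decay in $\log(1/\varepsilon)$, and it certainly dominates any polynomial $\varepsilon^s$. For $\varepsilon$ between consecutive points $\eta_0w_0^{j+1}$ and $\eta_0w_0^{j}$, monotonicity of $\phi$ handles the interpolation.

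With this estimate the union bound sum is
\begin{equation*}
 \sum_k (\#\mathcal B)^k\,\phi(e^{-\delta k}).
\end{equation*}
Here $\phi(e^{-\delta k})\le 2^{-N^{j(k)}}$ with $j(k)\approx \delta k/\log(1/w_0)$ growing linearly in $k$, so the sum converges. I expect this to be the only real obstacle. The key points to check are that $w_0<1$, which holds since $\overline L\ge \#\mathcal B\ge2$, and that the inequality holds almost surely on $\Omega_0$, where $M$ satisfies the equation of Proposition~\ref{pro-M-limit}(iii) at every $\widetilde\sigma_u\omega$ by part (ii). The measurability of $\max_{u}$ over a finite set is immediate.
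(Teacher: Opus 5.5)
Your proposal is correct and is essentially the paper's proof. The paper also derives $G(t)\le G(\overline L t)^{\#\mathcal B}$ from the smoothing equation and $\ell_b(\omega)^\lambda\ge1$, iterates it to the doubly exponential bound $G(t_0\overline L^{-r})\le G(t_0)^{(\#\mathcal B)^r}$, and concludes with the union bound over $\mathcal B^k$ and Borel--Cantelli over rational $\delta$. Your choice $\phi(\eta_0)\le 1/2$ simply absorbs the paper's separate case $G(s)=0$.
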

\begin{proof}
For $t>0$ let
\[G(t):=\Ptr(M\leq t).\]
 Since $M>0$ almost surely,
$G(t)\to0$ as $t\downarrow0$. If $\omega\in\Omega_0$ and $M(\omega)\leq t$, then  by
Proposition~\ref{pro-M-limit}(iii) and \eqref{def-wi} it follows that
\begin{equation*}
 \ell_b(\omega)^\lambda M(\widetilde\sigma_b\omega){=\overline L\cdot W_b(\omega) M(\widetilde{\sigma_b}\omega)}
 \leq \overline L\cdot M(\omega)\leq \overline L\cdot t   \qquad \forall b\in\mathcal B.
\end{equation*}
Since $\ell_b(\omega)^\lambda\geq1$, we have
$M(\widetilde\sigma_b\omega)\leq \overline Lt$ for every $b\in\mathcal B$.
{Note that} the variables $ M(\widetilde\sigma_b\omega), $
$b\in\mathcal B$,  are
independent copies of $M$. Hence,
\begin{equation}\label{eq-G-recursion}
	G(t)\le  \prod_{b\in \mathcal B}  \Ptr(\set{\omega\in\Omega_0:M(\widetilde\sigma_b\omega)\leq \overline L\cdot t}){=\prod_{b\in \mathcal B}  \Ptr(M\leq \overline L\cdot t)} \leq G(\overline L\cdot t)^{\#\mathcal B}.
\end{equation}

Suppose first that $G(s)=0$ for some $s>0$. Since each subtree
shift preserves $\Ptr$ and $\mathcal B^*$ is countable, for
$\Ptr$-a.e.\ $\omega$ we have
$M(\widetilde\sigma_u\omega)>s$ for all $u\in\mathcal B^*$.
The bound \eqref{eq-uniform} follows immediately. It remains to consider the case where $G(t)>0$ for all $t>0$.
Choose $t_0\in(0,1)$ such that $0<G(t_0)<1$. Since
$\ell_b(\omega)\geq1$ for every $b\in\mathcal B$, {by (\ref{eq-assumption}) and (\ref{eq-c0})} we have
$\overline L\geq\#\mathcal B\geq2$. Iterating \eqref{eq-G-recursion},
we obtain
\begin{equation}\label{eq-G-iterate}
	G(t_0 \overline L^{-r})\leq G(t_0)^{(\#\mathcal B)^r},
	\qquad r=0,1,2,\ldots.
\end{equation} 

 Fix $\delta>0$ and let 
$r_k:=\lfloor\delta k/(2\log \overline L)\rfloor$ for $ k\in \N $.
For all sufficiently large $k$ we have
$e^{-\delta k}\leq t_0 \overline L^{-r_k}$.  Hence, by \eqref{eq-G-iterate} and for all sufficiently large $k$ it follows that 
\begin{align*}
 \Ptr\left(\min_{u\in\mathcal B^k}
     M(\widetilde\sigma_u\omega)\leq e^{-\delta k}\right)&\leq\sum_{u\in\mathcal B^k}
     \Ptr\left(M(\widetilde\sigma_u\omega)
                    \leq t_0\overline L^{-r_k}\right)\\
 &=(\#\mathcal B)^kG(t_0\overline L^{-r_k})
 \leq(\#\mathcal B)^kG(t_0)^{(\#\mathcal B)^{r_k}}.
\end{align*}
{Since $G(t_0)\in(0,1)$, 
the last expression is summable in $k$.} 
By the Borel--Cantelli lemma, for $\Ptr$-a.e.\ $\omega$ and all
sufficiently large $k$, we have
$M(\widetilde\sigma_u\omega)>e^{-\delta k}$ for every
$u\in\mathcal B^k$. Consequently,  for $\Ptr$-a.e.\ $\omega$ we have 
\begin{equation*}
 \limsup_{k\to\infty}\frac1k
 \max_{u\in\mathcal B^k}
   \bigl(-\log M(\widetilde\sigma_u\omega)\bigr)\leq\delta.
\end{equation*}
Taking the intersection over positive rational $\delta$ proves
\eqref{eq-uniform}.
\end{proof}

\section{Proof of Theorem~\ref{thm-main}}\label{sec-proof}

In this section, we use the ergodic skew product constructed in
Section~\ref{sec-skew-product} and the estimates for $M$ obtained
there to prove Theorem~\ref{thm-main}.

For $k\geq1$, write $\theta_k:=\lfloor\lambda k\rfloor$.
For $\mathbf d=(d_r)_{r\geq1}\in\Sigma$, define the approximate square by
{\begin{equation}\label{eq-Qkd}
		Q_k(\mathbf d)
		=\Pi\left(\bigl\{(z_r)_{r=1}^{\infty}\in\Sigma:  		 
		z_1\ldots z_{\theta_k}=d_1\ldots d_{\theta_k};~ \tau(z_{\theta_k+1}\ldots z_{k})=\tau(d_{\theta_k+1}\ldots d_k) \bigr\}\right),
\end{equation}}
where $\Pi$ is the coding map defined in \eqref{eq}. Thus $Q_k(\mathbf d)$ is contained in a rectangle whose sides have length $ m^{-k} $ and $ n^{-\theta_k} .$ Since
$m^{-k}\leq n^{-\theta_k}<n\,m^{-k}$ {for any $k\in\N$}, its diameter is uniformly
comparable to $m^{-k}$. Moreover, distinct approximate squares of the
same level have disjoint interiors.

 The following lemma is the standard
approximate-square version of Billingsley's lemma; cf.~\cite{Billingsley1965}.

\begin{lem}\label{lem-bil}
	Let $\nu$ be a Borel probability measure
	supported on $\Sigma^{\omega}$  with $\omega\in\Omega$, and let $s\geq0$. If
	\begin{equation*}
		\liminf_{k\to\infty}
		\frac{\log\nu(\Pi^{-1}(Q_k(\mathbf d)))}{-k\log m}\geq s
		\qquad\text{for }\nu\text{-a.e. }\mathbf d\in\Sigma^{\omega},
	\end{equation*}
	then $\dim_H K_\omega(\mathcal A)\geq s$. If
	\begin{equation*}
		\liminf_{k\to\infty}
		\frac{\log\nu(\Pi^{-1}(Q_k(\mathbf d)))}{-k\log m}\leq s
		\qquad\text{for every }\mathbf d\in\Sigma^{\omega},
	\end{equation*}
	then $\dim_H K_\omega(\mathcal A)\leq s$.
\end{lem}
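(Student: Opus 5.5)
The plan is to transfer the statement to the classical Billingsley lemma on $[0,1]^2$, using the fact that approximate squares behave like balls of radius comparable to $m^{-k}$. Write $\nu_\Pi:=\nu\circ\Pi^{-1}$, a Borel probability measure supported on $K_\omega(\mathcal A)=\Pi(\Sigma^\omega)$. The diameter of $Q_k(\mathbf d)$ lies between $m^{-k}$ and a constant multiple of $m^{-k}$. Also, for each $k$ the level-$k$ approximate squares have disjoint interiors and tile the carpet's level-$k$ skeleton, so any ball of radius $m^{-k}$ meets at most $N$ of them, with $N$ independent of $k$. One caveat: $\Pi^{-1}(Q_k(\mathbf d))$ may strictly contain the symbolic approximate square, because of boundary points with two codings. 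To avoid this, I would work throughout with the symbolic sets $\widetilde Q_k(\mathbf d):=\{\mathbf z: z_1\cdots z_{\theta_k}=d_1\cdots d_{\theta_k},\ \tau(z_{\theta_k+1}\cdots z_k)=\tau(d_{\theta_k+1}\cdots d_k)\}$. Since $\nu(\widetilde Q_k)\le\nu(\Pi^{-1}Q_k)$, and $\Pi^{-1}Q_k$ is a union of at most boundedly many symbolic approximate squares of level $k$ (adjacent ones), the two quantities agree up to the effect of this finite overlap. In the lower bound this enters only through the ball estimate below.

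\emph{Lower bound.} Fix $t<s$. By Egorov, or simply by countable additivity, there are $k_0$ and a set $E\subseteq\Sigma^\omega$ with $\nu(E)\ge 1/2$ such that $\nu(\Pi^{-1}Q_k(\mathbf d))\le m^{-kt}$ for all $\mathbf d\in E$ and all $k\ge k_0$. Let $\{U_i\}$ be any cover of $\Pi(E)$ with $\operatorname{diam}U_i<m^{-k_0}$. For each $i$ choose $k_i$ with $m^{-k_i-1}\le\operatorname{diam}U_i<m^{-k_i}$. Then $U_i$ meets at most $N$ level-$k_i$ approximate squares. For those of them of the form $Q_{k_i}(\mathbf d)$ with $\mathbf d\in E$, we have $\nu_\Pi(U_i\cap\Pi(E))\le N m^{-k_it}\le N m^{t}(\operatorname{diam}U_i)^t$. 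Summing over $i$ gives $\sum_i(\operatorname{diam}U_i)^t\ge \nu(E)/(Nm^t)>0$. Hence $\mathcal H^t(K_\omega(\mathcal A))>0$, so $\dim_H K_\omega(\mathcal A)\ge t$, and letting $t\uparrow s$ gives the claim. The step needing care is that $\nu_\Pi(Q)\le\nu(\Pi^{-1}Q)$ with $Q$ the geometric approximate square, which holds trivially.

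\emph{Upper bound.} Fix $t>s$ and $\varepsilon>0$. For every $\mathbf d\in\Sigma^\omega$ there are infinitely many $k$ with $\nu(\Pi^{-1}Q_k(\mathbf d))\ge m^{-kt}$. Consider the family $\mathcal Q$ of all such approximate squares with $k\ge k_1$; these sets cover $K_\omega(\mathcal A)$. Approximate squares are nested: a level-$(k+1)$ approximate square lies inside a level-$k$ one, since $\theta_{k+1}\ge\theta_k$ and the column constraints extend. I would therefore extract a disjoint-interior subfamily by keeping the maximal elements, i.e. those of smallest level. Each point lies in a maximal one, because levels are bounded below by $k_1$. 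Disjointness of the symbolic versions follows from nestedness. Since $\nu$ is a probability measure, $\sum_{Q\in\mathcal Q_{\max}}\nu(\Pi^{-1}Q)\le C$ for a constant $C$ accounting for the boundary overlap; here I would work symbolically to get genuine disjointness. Then
\[\sum_{Q}\operatorname{diam}(Q)^t\le C'\sum_Q m^{-kt}\le C'\sum_Q\nu(\Pi^{-1}Q)\le C'',\]
so $\mathcal H^t_{\delta}(K_\omega(\mathcal A))\le C''$ with $\delta\to0$ as $k_1\to\infty$. Hence $\dim_H\le t$.

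The main obstacle is the nestedness claim in the upper bound. One must check that two symbolic approximate squares of levels $k\le k'$ are either disjoint or the second is contained in the first. This holds because the level-$k'$ square fixes $d_1\cdots d_{\theta_{k'}}$, with $\theta_{k'}\ge\theta_k$, together with the column word up to $k'\ge k$, and therefore determines the level-$k$ constraints. The remaining care is to use symbolic sets so that the measure estimates pass through $\Pi$ without losing anything beyond the bounded boundary multiplicity.
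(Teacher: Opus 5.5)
The paper gives no proof of this lemma; it quotes it as the standard approximate-square form of Billingsley's lemma. Your two-part plan is the standard proof: the mass distribution principle for the lower bound, and a maximal selection among nested approximate squares for the upper bound. The lower bound is correct as written. It needs no symbolic/geometric comparison, since the hypothesis already concerns $\nu\circ\Pi^{-1}$ of the geometric sets. Your nestedness check for the symbolic squares is also correct.

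The gap is in the upper bound, at the claim $\sum_{Q\in\mathcal Q_{\max}}\nu(\Pi^{-1}Q)\le C$.

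\textbf{Why the symbolic route fails.} Maximality gives disjointness only of the symbolic sets $\widetilde Q$, hence $\sum\nu(\widetilde Q)\le 1$. But your selection criterion is $\nu(\Pi^{-1}Q)\ge m^{-kt}$, and the inequality $\nu(\widetilde Q)\le\nu(\Pi^{-1}Q)$ runs the wrong way here. So ``working symbolically'' does not give the needed bound.

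\textbf{Why the same-level remark fails.} $\Pi^{-1}Q_k$ is not a union of boundedly many symbolic squares. It is only \emph{contained in} the union of the at most nine level-$k$ symbolic squares whose grid rectangles touch that of $Q$. For a family whose levels vary, these neighbour sets for different $Q$ can overlap with no uniform bound.

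\textbf{What is needed.} You need a pointwise multiplicity bound across levels.
\begin{itemize}
\item Let $R(Q)$ be the closed $m^{-k}\times n^{-\theta_k}$ grid rectangle containing $Q$; it is determined by $i_1\cdots i_k$ and $j_1\cdots j_{\theta_k}$.
\item For $k\le k'$ (so $\theta_k\le\theta_{k'}$), two such rectangles either have disjoint interiors or satisfy $R(Q')\subseteq R(Q)$. This follows from the corresponding dichotomy for $m$-adic and $n$-adic intervals.
\item The inclusion $R(Q')\subseteq R(Q)$ forces the column prefix of length $k$ and the row prefix of length $\theta_k$ to agree. By exactly your nestedness argument, this gives $\widetilde Q'\subseteq\widetilde Q$. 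That is impossible for distinct maximal elements.
\item Hence the rectangles $R(Q)$, $Q\in\mathcal Q_{\max}$, have pairwise disjoint interiors.
\item A closed nondegenerate axis-parallel rectangle containing a point $x$ contains in its interior an open square $x+\epsilon_1(0,r)\times\epsilon_2(0,r)$, for some signs $\epsilon_1,\epsilon_2\in\{\pm1\}$ and all small $r>0$. Two interior-disjoint rectangles cannot use the same sign pair, so at most four of the $R(Q)$ contain $x$.
\end{itemize}
Therefore $\sum_{Q\in\mathcal Q_{\max}}\nu(\Pi^{-1}Q)=\int\sum_Q\mathbf 1_Q\,d(\nu\circ\Pi^{-1})\le 4$, and the rest of your upper-bound computation goes through.

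In the paper's application, $\mu_\omega(\Pi^{-1}(G))=0$ makes $\Pi^{-1}Q_k(\mathbf d)$ and $C_k(\mathbf d)$ agree up to a null set, so the issue disappears there. The lemma, however, is stated for arbitrary $\nu$.

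A minor point: the lower bound $\operatorname{diam}Q_k(\mathbf d)\ge m^{-k}$ that you state is false in general. The horizontal extent is $m^{-k}$ times the diameter of the column projection of $K(\mathcal A)$. You never use it, though.
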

	Let $\Omega_0\subseteq\Omega$ be given as in Proposition \ref{pro-M-limit}. For $\omega\in\Omega_0$, let   $\mu_\omega$ be the  probability measure on $\Sigma$ defined  in  
  \eqref{eq-mu-cylinder}.  Furthermore, let $ \widetilde{\mu} $ be {the probability measure supported on $\Omega\times\Sigma$  defined as in \eqref{def-lmu}. Note by Proposition \ref{pro-ergodic} that $\widetilde{\mu}$ is supported on the skew product $\mathcal E_0$ defined as in \eqref{eq-defe0}.} 
 
 \begin{lem}\label{lem-G}
 	Set
 	\begin{equation}\label{eq-grid-boundary}
 		G:=
 		\bigcup_{k\geq1}
 		\left(
 		\bigcup_{p=0}^{m^k}
 		\left(\left\{\frac{p}{m^k}\right\}\times[0,1]\right)
 		\;\cup\;
 		\bigcup_{q=0}^{n^{\theta_k}}
 		\left([0,1]\times
 		\left\{\frac{q}{n^{\theta_k}}\right\}\right)
 		\right).
 	\end{equation}
 	Then $\mu_\omega(\Pi^{-1}(G))=0$ for
 	$\mathbb P$-a.e.\ $\omega\in\Omega_0$.
 \end{lem}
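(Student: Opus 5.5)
The plan is to reduce the claim to showing that $\mu_\omega$ gives zero mass to the preimage of each single grid line. $G$ is a countable union of vertical lines $\{p/m^k\}\times[0,1]$ and horizontal lines $[0,1]\times\{q/n^{\theta_k}\}$. It therefore suffices to prove that, for every $\omega\in\Omega_0$ (or $\mathbb P$-a.e.), we have $\mu_\omega(\Pi^{-1}(\{x_0\}\times[0,1]))=0$ for every $x_0\in[0,1]$, and $\mu_\omega(\Pi^{-1}([0,1]\times\{y_0\}))=0$ for every $y_0\in[0,1]$. In fact it is enough to handle $m$-adic rationals $x_0$ and $n$-adic rationals $y_0$, since these are the only lines occurring in $G$.

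The first step is to describe the preimages. A point $x_0$ has at most two base-$m$ expansions. Hence $\Pi^{-1}(\{x_0\}\times[0,1])$ is contained in the union of at most two sets of the form $\{\mathbf d:\tau(\mathbf d)=\mathbf i\}$, with $\mathbf i\in\{0,\dots,m-1\}^{\mathbb N}$ fixed. Likewise, $\Pi^{-1}([0,1]\times\{y_0\})$ lies in at most two sets $\{\mathbf d:\text{row}(\mathbf d)=\mathbf j\}$, where $\text{row}(i,j)=j$. Each such set is a decreasing intersection of unions of cylinders. For the vertical case, $\mu_\omega$ of the level-$k$ set equals $\prod_{r=1}^k q_{i_r}(\widetilde\sigma_{i_1\cdots i_{r-1}}\omega)$, by summing \eqref{eq-mu-cylinder} over the row symbols; this product telescopes by \eqref{def-qb}. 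For the horizontal case, the level-$k$ mass is a sum over column words of products of the $p_a$.

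The second step is the uniform bound on transition probabilities. By \eqref{eq-paom}, for $\omega\in\Omega_0$ and every admissible $a$ we have $p_a(\omega)=q_{\tau(a)}(\omega)/\ell_{\tau(a)}(\omega)$. Since $\ell_b(\omega)\ge1$, this is at most $q_{\tau(a)}(\omega)$. The key point is that in each step the conditional mass of a fixed row symbol $j_r$ is at most $\max_a p_a\le\max_b q_b$. Moreover, for a fixed column symbol the mass is $q_{i_r}\le 1-\min_{c\ne i_r}q_c$, using $\#\mathcal B\ge2$. So at each step the conditional probability of following the prescribed symbol is strictly below $1$. I will then argue with the telescoped form. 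In the vertical case,
\[
\mu_\omega(\{\tau(\mathbf d)=\mathbf i\})=\lim_k \frac{\prod_{r\le k}W_{i_r}(\widetilde\sigma_{i_1\cdots i_{r-1}}\omega)\,M(\widetilde\sigma_{i_1\cdots i_k}\omega)}{M(\omega)}.
\]
The product of the $W$'s is at most $(\max_b\ell_b^\lambda/\overline L)^k$, which decays exponentially because $\max_b\ell_b^\lambda<\overline L$, as shown in the proof of Proposition~\ref{pro-M-limit}. The horizontal case is similar: each step contributes a factor of at most $\ell_{b}(\omega)^{\lambda-1}\le1$ per column symbol, which must be combined with the $q$'s.

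The main obstacle is controlling $M(\widetilde\sigma_{i_1\cdots i_k}\omega)$ along a fixed branch, since exponential decay of the $W$-product must beat the possible growth of $M$. Here I would use a Borel--Cantelli argument analogous to Lemma~\ref{lem-uniform}, with a polynomial tail instead of the uniform minimum. For a fixed branch, which is countably many choices because the grid lines are countable, $M(\widetilde\sigma_u\omega)$ with $|u|=k$ has the law of $M$. Since $\mathbb E M^2<\infty$, Chebyshev gives $\mathbb P(M(\widetilde\sigma_u\omega)>e^{\delta k})\le e^{-2\delta k}$, which is summable. Hence for $\mathbb P$-a.e. $\omega$ the growth along the branch is subexponential, and the limit is $0$. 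The horizontal case is the delicate one, because the row word does not determine the branch in $\mathcal B^*$. For it I would instead bound the mass by the sum over $u\in\mathcal B^k$. This is $\mathbb E$-controlled: conditioned on the tree, the expected $\mu_\omega\cdot M(\omega)$ of the level-$k$ set is at most $\bigl(\sum_b\mathbb E[\ell_b(\omega)^{\lambda-1}]/\overline L\bigr)^k$-type quantities. Since $\ell_b(\omega)^{\lambda-1}\le\ell_b(\omega)^\lambda/\ell_b(\omega)$ and $\ell_b(\omega)\ge1$, with strict gain when $\ell_b\ge2$ on a positive-probability set, this is $\le\rho^k$ with $\rho<1$. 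I would then apply Fatou's lemma to conclude that $\mathbb E[M\,\mu_\omega(\cdot)]=0$, so the mass vanishes $\mathbb P$-a.e. Countability of the lines then finishes the proof.
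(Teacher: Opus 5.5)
Your proposal is correct, but it follows a different route from the paper. The paper never treats individual lines. It notes that $\Pi(\mathbf d)\in G$ forces the column digits to be eventually constant ($0$ or $m-1$), or the row digits to be eventually constant ($0$ or $n-1$). It then shows $\alpha_i=\widetilde\mu(d^{(1)}_1=i)<1$ and $\beta_j=\widetilde\mu(d^{(2)}_1=j)<1$. Birkhoff's theorem for the ergodic skew product of Proposition~\ref{pro-ergodic} gives these digit frequencies along $\widetilde\mu$-typical orbits, so the eventually-constant sets are $\widetilde\mu$-null. Fubini and $M>0$ transfer this to $\mu_\omega$.

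You instead use direct first-moment estimates for the cascade, one line at a time, and then take a countable union. The step you should write out is the factorisation in the horizontal case. Let $R_k$ be the set of $\mathbf d$ whose first $k$ row digits are $j_1,\dots,j_k$. After telescoping,
\[
M(\omega)\,\mu_\omega(R_k)=\sum_{b_1\cdots b_k\in\mathcal B^k}M(\widetilde\sigma_{b_1\cdots b_k}\omega)\prod_{r=1}^k\frac{\mathbf 1\{(b_r,j_r)\in\mathcal A,\ (b_r,j_r)\neq F_{b_1\cdots b_{r-1}}(\omega)\}\,\ell_{b_r}(\widetilde\sigma_{b_1\cdots b_{r-1}}\omega)^{\lambda-1}}{\overline L}.
\]
The $r$-th factor depends only on $F_{b_1\cdots b_{r-1}}$, and $M(\widetilde\sigma_{b_1\cdots b_k}\omega)$ depends only on the subtree below $b_1\cdots b_k$. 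Using independence and $\Etr M=1$, the expectation is at most $\rho^k$ with $\rho=\overline L^{-1}\sum_b\Etr[\ell_b(\omega)^{\lambda-1}]$.

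\begin{itemize}
\item $\rho<1$ holds because for every $\omega$ at most one column loses a symbol. So some $\ell_b(\omega)=\ell_b\ge2$, which gives $\sum_b\ell_b(\omega)^{\lambda-1}<\sum_b\ell_b(\omega)^\lambda$.
\item Since $R_k$ decreases, monotonicity already gives $\Etr[M\,\mu_\omega(\bigcap_kR_k)]=0$; Fatou is not needed.
\item The same computation handles vertical lines, with $\Etr[M\,\mu_\omega(\cdot)]=\prod_r\Etr W_{i_r}\le(\max_b\ell_b^\lambda/\overline L)^k$. Your Chebyshev/Borel--Cantelli step there is correct but unnecessary.
\item Your aside that the one-step mass of a fixed row symbol is at most $\max_a p_a$ is inaccurate; it equals $\sum_b p_{(b,j)}$. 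You never use it, so it does no harm.
\end{itemize}

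As for trade-offs, the paper's proof is shorter once ergodicity of $\widetilde\mu$ is available, and it treats both directions with one soft estimate. Yours is self-contained: it needs only independence along the tree and $\Etr M=1$, not Lemma~\ref{lem-kf} or Proposition~\ref{pro-ergodic}. It also yields a quantitative exponential decay of the $\mu_\omega$-mass of the level-$k$ cylinder sets covering each grid line.
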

  
 	\begin{proof}{For an element in $\Sigma$ we write  $\mathbf d=(d_r)_{r\geq1}\in\Sigma$ with
 	each $d_r=(d^{(1)}_r,d^{(2)}_r)\in\mathcal A$.}
 		For $i\in\{0,m-1\}$ and $j\in\{0,n-1\}$, we define
 		{\[
 		\begin{aligned}
 			E_i
 			&:=
 			\bigcup_{k=1}^\infty\left\{(\omega,\mathbf d)\in\mathcal E_0:
 			{d^{(1)}_r}=i \text{ for all }r\ge k\right\},\\
 			F_j
 			&:=
 			\bigcup_{k=1}^\infty\left\{(\omega,\mathbf d)\in\mathcal E_0:
 			{d^{(2)}_r}=j \text{ for all  }r\ge k\right\}.
 		\end{aligned}
 		\]}
 		
 		Fix $i\in\{0,m-1\}$ and $j\in\{0,n-1\}$.
 		By \eqref{eq-assumption}, we have $\#\mathcal B\geq2$
 		and $\ell_b(\omega)\geq1$ for every $b\in\mathcal B$
 		and $\omega\in\Omega_0$.
 		Moreover, a column not containing the forbidden symbol
 		retains at least two distinct row digits.
 		Since $p_a(\omega)>0$ for every admissible symbol $a\in\mathcal A$,
 		we have, for every $\omega\in\Omega_0$,
 		\begin{equation}\label{eq-p1}
 			\sum_{u:(i,u)\in\mathcal A}p_{(i,u)}(\omega)<1,
 			\qquad
 			\sum_{b:(b,j)\in\mathcal A}p_{(b,j)}(\omega)<1.
 		\end{equation}
 		{By (\ref{def-lmu})} it follows that
 		\begin{align*}
 			\alpha_i
 			&:=
 			\widetilde\mu\bigl(
 			\{(\omega,\mathbf d)\in\mathcal E_0:{d^{(1)}_1}=i\}
 			\bigr)\\
 			&=
 			\int_{\Omega_0}
 			\mu_\omega(\{\mathbf d:{d^{(1)}_1}=i\})
 			M(\omega)\,d\mathbb P(\omega)\\
 			&=
 			\sum_{u:(i,u)\in\mathcal A}
 			\int_{\Omega_0}
 			p_{(i,u)}(\omega)M(\omega)\,d\mathbb P(\omega)<1,
 		\end{align*}
 		where the last inequality follows from \eqref{eq-p1}
 		and the identity $\mathbb E M=1$ in
 		Proposition~\ref{pro-M-limit}.
 		By a similar argument, we have
 		$
 		\beta_j
 		:=
 		\widetilde\mu\bigl(
 		\{(\omega,\mathbf d)\in\mathcal E_0:{d^{(2)}_1}=j\}
 		\bigr)<1.
 		$
 		
 		Define $f_i:\mathcal E_0\to\{0,1\}$ by
 		\[
 		f_i(\omega,\mathbf d)
 		:=
 		\mathbf{1}_{\{{d^{(1)}_1}=i\}}(\omega,\mathbf d) 
 		\qquad\textrm{for }
 		(\omega,\mathbf d)\in\mathcal E_0.
 		\]
 		{Note by Proposition~\ref{pro-ergodic} that $\widetilde\mu$ is
 		$T$-invariant and ergodic on $\mathcal E_0$.}
 		Applying Birkhoff's ergodic theorem to $f_i$, we obtain
 		\begin{align*}
 			\lim_{N\to\infty}\frac1N
 			\sum_{r=1}^N
 			\mathbf{1}_{\{{d^{(1)}_r}=i\}}(\omega,\mathbf d)
 			&=
 			\lim_{N\to\infty}\frac1N
 			\sum_{r=1}^N
 			f_i\bigl(T^{r-1}(\omega,\mathbf d)\bigr)\\
 			&=
 			\int_{\mathcal E_0}f_i\,d\widetilde\mu =
 			\widetilde\mu\bigl(
 			\{(\omega,\mathbf d)\in\mathcal E_0:{d^{(1)}_1}=i\}
 			\bigr)
 			=\alpha_i<1
 		\end{align*}
 		for $\widetilde\mu$-a.e.\
 		$(\omega,\mathbf d)\in\mathcal E_0$.
 		On the other hand,
 		\[
 		E_i\subseteq
 		\left\{(\omega,\mathbf d)\in\mathcal E_0:
 		\lim_{N\to\infty}\frac1N
 		\sum_{r=1}^N
 		\mathbf{1}_{\{{d^{(1)}_r}=i\}}(\omega,\mathbf d)=1
 		\right\}.
 		\]
 		It follows that $\widetilde\mu(E_i)=0$.
 		By a similar argument, we have $\widetilde\mu(F_j)=0$.
 		
 		Note that every base-$m$ expansion of $p/m^k$
 		is eventually constant with value $0$ or $m-1$,
 		and the analogous statement holds in base $n$.
 		Thus
 		\[
 		\{(\omega,\mathbf d)\in\mathcal E_0:\Pi(\mathbf d)\in G\}
 		\subseteq
 		\bigcup_{i\in\{0,m-1\}}E_i
 		\;\cup\;
 		\bigcup_{j\in\{0,n-1\}}F_j.
 		\]
 Using this inclusion and the definition of $\widetilde\mu$, we obtain
 		\[
 		\begin{aligned}
 			 \int_{\Omega_0}
 			\mu_\omega(\Pi^{-1}(G))M(\omega)\,d\mathbb P(\omega) 
 			&=
 			\widetilde\mu\bigl(
 			\{(\omega,\mathbf d)\in\mathcal E_0:
 			\Pi(\mathbf d)\in G\}
 			\bigr)\\
 			& \leq
 			\sum_{i\in\{0,m-1\}}\widetilde\mu(E_i)
 			+
 			\sum_{j\in\{0,n-1\}}\widetilde\mu(F_j)
 			=0.
 		\end{aligned}
 		\]
 Combining this with the fact that $M>0$ on $\Omega_0$
 by Proposition~\ref{pro-M-limit}, we conclude that
 $\mu_\omega(\Pi^{-1}(G))=0$ for
 $\mathbb P$-a.e.\ $\omega\in\Omega_0$.
 	\end{proof}

 The following lemma was established in \cite[Lemma 4.1]{Kenyon1996}.
\begin{lem}\label{lem-limsup}
  If $f:\N\to\R$ satisfies $\sup_{k\in\N}|f(k+1)-f(k)|<+\infty$, then for any $\alpha>0$ we have 
  \[
  \limsup_{k\to\infty}\frac{1}{k}\left(\alpha f(k)-f(\lfloor\alpha k\rfloor)\right)\ge 0.
  \]
\end{lem}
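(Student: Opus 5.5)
The plan is to argue by contradiction, using only the growth bound that bounded increments give. Set $C_0:=\sup_k|f(k+1)-f(k)|<\infty$. Then $|f(k)|\le |f(1)|+C_0k$, so $g(k):=f(k)/k$ satisfies $|g(k)|\le C$ for all $k\ge1$, for some constant $C$. The case $\alpha=1$ is trivial, since the expression is identically $0$. Otherwise suppose the conclusion fails. Then there are $\varepsilon>0$ and $K$ such that
\[
\alpha f(k)-f(\lfloor\alpha k\rfloor)\le-\varepsilon k\qquad\text{for all }k\ge K .
\]
I would then build a sequence $(k_j)$ tending to infinity along which $g$ drifts linearly in $j$. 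This contradicts the boundedness of $g$.

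\emph{Case $0<\alpha<1$.} The values $\alpha k$ increase in steps $\alpha<1$, so the map $k\mapsto\lfloor\alpha k\rfloor$ hits every nonnegative integer. Start from $k_0\ge K$. Choose $k_{j+1}$ with $\lfloor\alpha k_{j+1}\rfloor=k_j$; then $k_j\le\alpha k_{j+1}<k_j+1$ and $k_{j+1}\ge k_j/\alpha\to\infty$. The assumed inequality at $k_{j+1}$ reads $f(k_j)\ge\alpha f(k_{j+1})+\varepsilon k_{j+1}$. Dividing by $\alpha k_{j+1}$ gives
\[
g(k_{j+1})\le g(k_j)\,\frac{k_j}{\alpha k_{j+1}}-\frac{\varepsilon}{\alpha}.
\]
Since $k_j/(\alpha k_{j+1})\in\bigl(k_j/(k_j+1),1\bigr]$ and $|g|\le C$, the first term is at most $g(k_j)+C/k_j$. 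Hence $g(k_{j+1})\le g(k_j)-\varepsilon/\alpha+C/k_j$. Because $k_j\to\infty$ geometrically, this forces $g(k_j)\to-\infty$, a contradiction.

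\emph{Case $\alpha>1$.} Here I would run the iteration forward instead: put $k_{j+1}:=\lfloor\alpha k_j\rfloor$ starting from $k_0\ge K$ large, so that $k_j\to\infty$. The assumption gives $f(k_{j+1})\ge\alpha f(k_j)+\varepsilon k_j$. Dividing by $k_{j+1}\in(\alpha k_j-1,\alpha k_j]$ yields
\[
g(k_{j+1})\ge g(k_j)\frac{\alpha k_j}{k_{j+1}}+\frac{\varepsilon k_j}{k_{j+1}}\ge g(k_j)-\frac{C'}{k_j}+\frac{\varepsilon}{\alpha}.
\]
Here the ratio $\alpha k_j/k_{j+1}$ lies in $\bigl[1,\,1+1/(\alpha k_j-1)\bigr)$, so together with $|g|\le C$ this produces the $C'/k_j$ error. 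Summing, $g(k_j)\to+\infty$, again contradicting boundedness.

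The only delicate point is the bookkeeping in these divisions. The ratio $k_j/(\alpha k_{j+1})$ (respectively $\alpha k_j/k_{j+1}$) is not exactly $1$, and $g$ may have either sign. I would handle this by writing the product $g(k_j)\times\text{ratio}$ as $g(k_j)+g(k_j)(\text{ratio}-1)$ and bounding the second term by $C\,|\text{ratio}-1|=O(1/k_j)$. This is summable along the geometric sequence $(k_j)$, whereas the drift $\varepsilon/\alpha$ per step accumulates linearly, so the drift wins.
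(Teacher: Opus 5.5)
Your proof is correct and complete. The paper does not prove this lemma at all; it simply quotes \cite[Lemma~4.1]{Kenyon1996}, so you are giving a self-contained argument where the paper gives a citation.

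The mechanism is sound. You normalise by $g(k)=f(k)/k$. You then produce an infinite chain $k_j\to\infty$ along which the assumed inequality applies at every step: via preimages of $k\mapsto\lfloor\alpha k\rfloor$ when $0<\alpha<1$, and via forward iteration when $\alpha>1$. Along this chain, the drift of size $\varepsilon/\alpha$ per step outweighs the $O(1/k_j)$ errors coming from the floor function. Controlling the ratio terms through $|g|\le C$ correctly handles the fact that $g(k_j)$ may be negative.

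Your route also shows something the citation does not. Bounded increments are used only through the linear bound $|f(k)|=O(k)$, so the lemma holds under that weaker hypothesis.

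Finally, the paper only needs the case $\alpha=\lambda\in(0,1)$. It applies the lemma to $f(k)=\sum_{r=1}^{k}X(T^{r-1}(\omega,\mathbf d))$ in the upper bound of Theorem~\ref{thm-main}. Your first case alone covers that; the cases $\alpha=1$ and $\alpha>1$ are needed only for the statement as written.
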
  
\begin{proof}[Proof of Theorem \ref{thm-main}] 
	By Lemmas~\ref{lem-uniform} and~\ref{lem-G},
	choose a $\mathbb P$-full set $\Omega_1\subseteq\Omega_0$
	such that, for every $\omega\in\Omega_1$,
	\eqref{eq-uniform} holds and
	$\mu_\omega(\Pi^{-1}(G))=0$.
	
	Fix $\omega\in\Omega_1$ and let $\mu_\omega$ be the measure
	defined as in \eqref{eq-mu-cylinder}.
	By Lemma~\ref{lem-mu-support}, $\mu_\omega$ is supported on
	$\Sigma^{\omega}$, and hence $\mu_\omega\Pi^{-1}$ is
	supported on $K_\omega(\A){=\Pi(\Sigma^\omega)}$.
	Moreover, every admissible cylinder set has positive
	$\mu_\omega$-measure, and therefore
	$\operatorname{supp}(\mu_\omega\Pi^{-1})=K_\omega(\A)$.
	
	Let $\mathbf d=(d_r)_{r=1}^{\infty}\in\Sigma^{\omega}$ with each $d_r=(i_r,j_r)\in\mathcal A$.
	For $k\in\mathbb N$, set $\theta_k:=\lfloor\lambda k\rfloor$
	and define
	\[
	C_k(\mathbf d)
	:=
	\bigcup_{\substack{
			a_{\theta_k+1}\cdots a_k\in\mathcal A^{k-\theta_k}:\\
			\tau(a_r)=i_r,\ \theta_k<r\leq k
	}}
	[d_1\cdots d_{\theta_k}a_{\theta_k+1}\cdots a_k].
	\]
	It is clear that $Q_k(\mathbf d)=\Pi(C_k(\mathbf d))$.
	On the other hand,
	\[
	\Pi^{-1}\bigl(Q_k(\mathbf d)\setminus G\bigr)
	\subseteq C_k(\mathbf d).
	\]
	Indeed, otherwise there would exist
	$\mathbf i\in\mathcal A^{\mathbb N}\setminus C_k(\mathbf d)$
	and $\mathbf j\in C_k(\mathbf d)$ such that
	$\Pi(\mathbf i)=\Pi(\mathbf j)\notin G$,
	contradicting the uniqueness of the coordinate expansions
	outside $G$.
	Since $\mu_\omega(\Pi^{-1}(G))=0$,
	it follows from \eqref{eq-mu-cylinder} that
	\begin{equation}\label{eq-muQ}
		\begin{aligned}
			\mu_\omega\Pi^{-1}(Q_k(\mathbf d))
			&=\mu_\omega(C_k(\mathbf d))\\
			&=
			\prod_{r=1}^{\theta_k}
			p_{d_r}\bigl(
			\widetilde\sigma_{i_1\cdots i_{r-1}}\omega
			\bigr)
			\prod_{r=\theta_k+1}^{k}
			\left(
			\sum_{\substack{a\in\mathcal A\\\tau(a)=i_r}}
			p_a\bigl(
			\widetilde\sigma_{i_1\cdots i_{r-1}}\omega
			\bigr)
			\right)\\
			&=
			\prod_{r=1}^{\theta_k}
			p_{d_r}\bigl(
			\widetilde\sigma_{i_1\cdots i_{r-1}}\omega
			\bigr)
			\prod_{r=\theta_k+1}^{k}
			q_{i_r}\bigl(
			\widetilde\sigma_{i_1\cdots i_{r-1}}\omega
			\bigr),
		\end{aligned}
	\end{equation}
	where $q_b$ is defined in \eqref{def-qb}, and the last equality
	follows from
	$\sum_{\tau(a)=b}p_a(\omega)=q_b(\omega)$.   Substituting
	\eqref{eq-paom} and \eqref{def-qb}  in \eqref{eq-muQ} and cancelling the successive
	factors involving $M$, we obtain  
	\begin{equation}\label{eq-muQ1}
	 	\begin{aligned}
	 		\mu_\omega\Pi^{-1}(Q_k(\mathbf d))	&=	\prod_{r=1}^{\theta_k} \frac{		\ell_{i_r}(\widetilde\sigma_{i_1\cdots i_{r-1}}\omega)^{\lambda-1} M(\widetilde\sigma_{i_1\cdots i_{r}}\omega)}{{\overline{L}} \cdot M(\widetilde\sigma_{i_1\cdots i_{r-1}}\omega)} \\
&\qquad\cdot  	\prod_{r=\theta_k+1}^{k} \frac{		\ell_{i_r}(\widetilde\sigma_{i_1\cdots i_{r-1}}\omega)^{\lambda} M(\widetilde\sigma_{i_1\cdots i_{r}}\omega)}{{\overline{L}} \cdot  M(\widetilde\sigma_{i_1\cdots i_{r-1}}\omega)}  \\
	 		&= \frac{M(\widetilde\sigma_{i_1\cdots i_{k}}\omega)}{{\overline{L}^k}\cdot M(\omega)}	\cdot\frac{
	 			\prod_{r=1}^{k}
	 			\ell_{i_r}(\widetilde\sigma_{i_1\cdots i_{r-1}}\omega)^{\lambda}
	 		}{
	 			\prod_{r=1}^{\theta_k}
	 			\ell_{i_r}(\widetilde\sigma_{i_1\cdots i_{r-1}}\omega) 
	 		}    .
	 	\end{aligned}
	\end{equation} 

	Let $T:{\mathcal E}\to \mathcal E$ be the shift map defined in \eqref{def-t}. Define
	$X :{\mathcal E }\to\mathbb R$ by
	$$
	X(\omega,\mathbf d):=\log\ell_{\tau(d_1)}(\omega).
	$$
	Then {$X(T^{r-1}(\omega,\mathbf d))=X((\widetilde{\sigma}_{\tau(d_1\ldots d_{r-1})}\omega, \sigma^{r-1}\mathbf d))=\log\ell_{\tau(d_r)}(\widetilde{\sigma}_{\tau(d_1\ldots d_{r-1})}\omega)$}. Clearly, $X$ is bounded. So, for every $\omega\in\Omega_1$ and every $\mathbf d\in\Sigma^\omega$,
	taking logarithms in \eqref{eq-muQ1} and dividing by $-k\log m$ gives
	\begin{equation}	\label{eq-local-expression-level}
		\begin{aligned}
			\liminf_{k\to\infty}
			\frac{\log\mu_\omega\Pi^{-1}(Q_k(\mathbf d))}{-k\log m}
			&= \frac{\log \overline L}{\log m}+
			\liminf_{k\to\infty}
			\frac{1}{ \log m}
			\bigg(   \frac{\log M(\omega)}{k}  -   \frac{ \log    M(\widetilde\sigma_{i_1\cdots i_{k}}\omega)}{ k}      \\
			&\qquad+
			\frac{1}{k}
			\sum_{r=1}^{\theta_k}X(T^{r-1}(\omega,\mathbf d))	- \frac{\lambda}{k}
			\sum_{r=1}^{k}X(T^{r-1}(\omega,\mathbf d))
			\bigg)  .
		\end{aligned}
	\end{equation}
	
	We   first prove the upper estimate. {Since $\lim_{k\to\infty}\log M(\omega)/k=0$ for any $\omega\in\Omega_1$, by
	\eqref{eq-uniform} and \eqref{eq-local-expression-level} it follows that for any $\omega\in\Omega_1$ and any $\mathbf d\in\Sigma^\omega$,}
 \begin{equation*}	\label{eq-upper-local-level} 
	\begin{aligned}
		&	\liminf_{k\to\infty}
		\frac{\log\mu_\omega\Pi^{-1}(Q_k(\mathbf d))}{-k\log m}\\
		&\qquad \leq \frac{\log \overline L}{\log m}  -	\limsup_{k\to\infty}
	\frac{1}{k }
		\bigg(\frac{\lambda}{\log m}
		\sum_{r=1}^{k}X(T^{r-1}(\omega,\mathbf d))-
		\frac{1}{\log m}
		\sum_{r=1}^{\lfloor\lambda k\rfloor}X(T^{r-1}(\omega,\mathbf d)) \bigg)\\ 
		& \qquad \le
		\frac{\log \overline L}{\log m},
	\end{aligned}
\end{equation*} where the last inequality follows by Lemma \ref{lem-limsup}  and the boundedness of $X$. 
Since
$\mathbf d\in\Sigma^\omega$ was arbitrary, it follows from   Lemma~\ref{lem-bil}  that
	\begin{equation}\label{eq-upper-final}
		\dim_H K_\omega(\A)
		\le
		\frac{\log \overline L}{\log m}\qquad\forall~\omega\in\Omega_1.
	\end{equation}

	We next prove the lower estimate.  By Proposition~\ref{pro-ergodic}, the measure $\widetilde\mu$ is
	$T$-ergodic.  Applying Birkhoff's ergodic theorem   to
	$(\mathcal E,T,\widetilde\mu)$, it follows that for $\widetilde\mu$-a.e.\
	$(\omega,\mathbf d)\in\mathcal E_0$,
	$$
	\lim_{k\to\infty}
	\frac{1}{k}\sum_{r=1}^{k}X(T^{r-1}(\omega,\mathbf d))
	=
	\int_{\mathcal E}X(\omega,\mathbf d)\,d\widetilde\mu(\omega,\mathbf d).
	$$
Since $\theta_k{=\lfloor\lambda k\rfloor}\to\infty$ and $\theta_k/k\to\lambda$, the two
averages in \eqref{eq-local-expression-level} cancel in the limit.
Together with Lemma~\ref{lem-M0}, this gives {that for $\widetilde{\mu}$-a.e.~$(\omega,\mathbf d)\in\mathcal E_0$,}
	\begin{equation}\label{eq-lower-local}\begin{aligned}
			\liminf_{k\to\infty}
			\frac{\log\mu_\omega\Pi^{-1}(Q_k(\mathbf d))}{-k\log m}
			&=
			\frac{1}{\log m}
			\left(\log \overline L
			- \lambda\int_{\mathcal E}X \, d\widetilde\mu
			+ \lambda\int_{\mathcal E} X\, d\widetilde\mu
			\right) \\
			&  =
			\frac{\log \overline L}{\log m}.
		\end{aligned} 
	\end{equation}
	Since
	$d\widetilde\mu(\omega,\mathbf d)=M(\omega)\,d\mu_\omega(\mathbf d)\,d\mathbb P(\omega)$
	and $M>0$ almost surely, Fubini's theorem implies that
	\eqref{eq-lower-local} holds for $\mathbb P$-a.e.\,$\omega$ and
	$\mu_\omega$-a.e.\,$\mathbf d\in\Sigma^{\omega}$. The lower-bound part of
	Lemma~\ref{lem-bil} therefore gives
	\begin{equation}\label{eq-lower-final}
		\dim_H K_\omega(\mathcal A)
		\ge
		\frac{\log \overline L}{\log m}
		\qquad
		\text{for }\mathbb P \text{-a.e.  }\omega.
	\end{equation}
	Combining \eqref{eq-upper-final} and \eqref{eq-lower-final}, and using
	\eqref{eq-c0}, proves the theorem. 
\end{proof}
 
\medskip
{\noindent\bf Acknowledgements.}
 Kong was supported by Chongqing NSF: CQYC20220511052 and Scientific Research Innovation Capacity Support Project for Young Faculty No.~ZYGXQNISKYCXNLZCXM-P2P. Ma was supported by NKFIH grants K142169 and KKP144059. Simon was supported by NKFIH grants KKP144059 and Advanced grant 152858.

\medskip
{\noindent\bf Declaration of AI use.} 
 The {authors} used ChatGPT (OpenAI) to improve the
language and organization of the manuscript and to assist in
identifying relevant references. ChatGPT also assisted in developing
the proof of Lemma~\ref{lem-uniform} and in developing a rigorous
proof of Lemma~\ref{lem-G} based on the authors' intuition.
All mathematical arguments and cited references were independently
checked by the {authors}, who also reviewed and edited all AI-assisted
material and {took} full responsibility for the content of the paper.

 \FloatBarrier


\begin{thebibliography}{99}
	
	\bibitem{Baranski2007}
	K.~Bara\'nski,
	\emph{Hausdorff dimension of the limit sets of some planar geometric constructions},
	Adv. Math. \textbf{210} (2007), 215--245.
	
	\bibitem{Bedford1984}
	T.~J.~Bedford,
	\emph{Crinkly curves, Markov partitions and dimension},
	Ph.D. thesis, University of Warwick, 1984.
	
	\bibitem{Billingsley1965}
	P.~Billingsley,
	\emph{Ergodic Theory and Information},
	John Wiley \& Sons, New York, 1965.
	
	\bibitem{Bruin2018}
	H.~Bruin, M.~F.~Demers and M.~Todd,
	\emph{Hitting and escaping statistics: mixing, targets and holes},
	Adv. Math. \textbf{328} (2018), 1263--1298.
	
	\bibitem{Demers2005}
	M.~F.~Demers,
	\emph{Markov extensions for dynamical systems with holes: an application to expanding maps of the interval},
	Israel J. Math. \textbf{146} (2005), 189--221.
	
	\bibitem{DemersYoung2006}
	M.~F.~Demers and L.-S.~Young,
	\emph{Escape rates and conditionally invariant measures},
	Nonlinearity \textbf{19} (2006), 377--397.
	
	\bibitem{Falconer1986}
	K.~J.~Falconer,
	\emph{Random fractals},
	Math. Proc. Cambridge Philos. Soc. \textbf{100} (1986), 559--582.
	
	\bibitem{Falconer2003}
	K.~J.~Falconer,
	\emph{Fractal Geometry: Mathematical Foundations and Applications},
	2nd ed., John Wiley \& Sons, Chichester, 2003.
	
	\bibitem{FalconerJin2014}
	K.~J.~Falconer and X.~Jin,
	\emph{Exact dimensionality and projections of random self-similar measures and sets},
	J. Lond. Math. Soc. (2) \textbf{90} (2014), 388--412.
	
	\bibitem{FergusonJordanRams2015}
	A.~Ferguson, T.~Jordan and M.~Rams,
	\emph{Dimension of self-affine sets with holes},
	Ann. Acad. Sci. Fenn. Math. \textbf{40} (2015), 63--88.
	
	\bibitem{Ferguson2012}
	A.~Ferguson and M.~Pollicott,
	\emph{Escape rates for Gibbs measures},
	Ergodic Theory Dynam. Systems \textbf{32} (2012), 961--988.
	
	\bibitem{Graf1987}
	S.~Graf,
	\emph{Statistically self-similar fractals},
	Probab. Theory Related Fields \textbf{74} (1987), 357--392.
	
	\bibitem{Graf1988}
	S.~Graf, R.~D.~Mauldin and S.~C.~Williams,
	\emph{The exact Hausdorff dimension in random recursive constructions},
	Mem. Amer. Math. Soc. \textbf{71} (1988), no.~381, x+121~pp.
	
	\bibitem{Gui2008}
	Y.-X.~Gui and W.-X.~Li,
	\emph{A random version of McMullen--Bedford general Sierpinski carpets and its application},
	Nonlinearity \textbf{21} (2008), 1745--1758.
	
	\bibitem{Kenyon1996}
	R.~Kenyon and Y.~Peres,
	\emph{Measures of full dimension on affine-invariant sets},
	Ergodic Theory Dynam. Systems \textbf{16} (1996), 307--323.
	
	\bibitem{Kong2026}
	D.-R.~Kong, B.-B.~Sun and Z.-Q.~Wang,
	\emph{Open dynamical systems with a moving hole},
	Adv. Math. \textbf{484} (2026), Article~110710.
	
	\bibitem{GatzourasLalley1992}
	S.~P.~Lalley and D.~Gatzouras,
	\emph{Hausdorff and box dimensions of certain self-affine fractals},
	Indiana Univ. Math. J. \textbf{41} (1992), 533--568.
	
	\bibitem{MauldinWilliams1986}
	R.~D.~Mauldin and S.~C.~Williams,
	\emph{Random recursive constructions: asymptotic geometric and topological properties},
	Trans. Amer. Math. Soc. \textbf{295} (1986), 325--346.
	
	\bibitem{McMullen1984}
	C.~McMullen,
	\emph{The Hausdorff dimension of general Sierpi\'nski carpets},
	Nagoya Math. J. \textbf{96} (1984), 1--9.
	
	\bibitem{Pianigiani1979}
	G.~Pianigiani and J.~A.~Yorke,
	\emph{Expanding maps on sets which are almost invariant. Decay and chaos},
	Trans. Amer. Math. Soc. \textbf{252} (1979), 351--366.
	
\end{thebibliography}
\end{document}